\documentclass[12 pt]{article}
\usepackage{amsmath, amssymb}
\usepackage{amsthm, enumerate}
\usepackage{verbatim}

\newcommand\eps{\epsilon}

\newcommand\la{\lambda}

\newcommand\om{\omega}

\renewcommand\th{\theta} 

%
%

%
%
\newcommand\cx{{\mathbb C}}

\newcommand\ints{{\mathbb Z}}
\newcommand\re{{\mathbb R}}
\newcommand\rats{{\mathbb Q}}

%
%


\newcommand\pmat[1]{\begin{pmatrix} #1 \end{pmatrix}}

%
%

\newcommand\one{\textbf{1}}

\DeclareMathOperator{\tr}{tr}

%
%

%
%
\newcommand\bra[1]{\langle#1|}
\newcommand\ket[1]{|#1\rangle}
\newcommand\braket[2]{\langle#1|#2\rangle}

\renewcommand\Re{\mathrm{Re}}
\renewcommand\Im{\mathrm{Im}}
\newcommand\abs[1]{\left|#1\right|}

\newtheoremstyle{plainsl}
	{\topsep}
	{\topsep}
	{\slshape} 
	{}
	{\normalfont\bfseries}
	{.}
	{ }
	{}

\swapnumbers

\theoremstyle{plainsl}
\newtheorem{theorem}{Theorem}[section]

\newtheorem{lemma}[theorem]{Lemma}
\newtheorem{corollary}[theorem]{Corollary}

\theoremstyle{definition}

\renewcommand\proof{\noindent\textsl{Proof. }}
\newcommand\sqr[2]{{\vbox{\hrule height.#2pt
    \hbox{\vrule width.#2pt height#1pt \kern#1pt
        \vrule width.#2pt}\hrule height.#2pt}}}
\renewcommand\qed{%
	\ifmmode\eqno\sqr53
	\else\nolinebreak\ \hfill\sqr53\medbreak\fi}

\numberwithin{equation}{section}

\title{Uniform Mixing and Association Schemes}
\author{Chris Godsil, Natalie Mullin, Aidan Roy}

\begin{document}
\maketitle

\abstract{
We consider continuous-time quantum walks on distance-regular graphs. 
Using results about the existence of complex Hadamard matrices in association schemes, 
we determine which of these graphs have quantum walks that admit uniform mixing.

First we apply a result due to Chan to show that the only strongly regular graphs that admit instantaneous uniform mixing are the Paley graph of order nine and certain graphs corresponding to regular symmetric Hadamard matrices with constant diagonal. Next we prove that if uniform mixing occurs on a bipartite graph $X$ with $n$ vertices, then $n$ is divisible by four. We
also prove that if $X$ is bipartite and regular, then $n$ is the sum of two integer squares.
Our work on bipartite graphs implies that uniform mixing does not occur on $C_{2m}$ for $m \geq 3$. Using a result of Haagerup, we show that uniform mixing does not occur on $C_p$ for any prime $p$ such that $p \geq 5$. In contrast to this result, we see that $\epsilon$-uniform mixing occurs on $C_p$ for all primes $p$.}

\section{Introduction}

Quantum walks are a quantum analogue of a random walk on a graph, and have recently been the
subject of much investigation. For recent survey from a mathematician's viewpoint,
see \cite{godsil12}. A quantum walk can behave quite differently from a classical walk: for example Childs et al.~\cite{childs03}
found a graph in which the time to propagate from one node to another was sped up
exponentially compared to any classical walk.

We describe the physical motivation for continuous-time quantum walks. Let $X$ be
a graph with Laplacian matrix $L$, and let $p(t) = (p_1(t),\ldots,p_n(t))$ be a probability
distribution over the vertices $\{1,\ldots,n\}$ of $X$, for each time $t$. Then $p(t)$ is a
continuous-time random walk on $X$ if it satisfies the differential equation 
\[
\frac{\mathrm{d}p(t)}{\mathrm{d}t} = -L p(t). 
\] 
This is a continuous version of a
discrete-time random walk, in which the Laplacian matrix is the transition matrix for a
Markov process on the vertices. The solution to the differential equation is 
\[ 
	p(t) = e^{-Lt}p(0). 
\] 
Since $L$ has rows which sum to $0$, $e^{-Lt}$ has rows which sum to $1$ and
is therefore a stochastic process.

In a continuous-time quantum walk, instead of a probability distribution $p(t)$ we use a
quantum state $\ket{\psi(t)}$ defined on a state space with orthonormal basis $\{\ket{1},
\ldots, \ket{n} \}$. Instead of a stochastic process, the walk is governed by a unitary
evolution. From Schr\"odinger's equation for the evolution of a quantum system
\[
	i \frac{\mathrm{d}\ket{\psi(t)}}{\mathrm{d}t } = L \ket{\psi(t)},
\]
we have the solution
\[
	\ket{\psi(t)} = e^{itL}\ket{\psi(0)}.
\]
Since the Laplacian $L$ is Hermitian, $e^{iLt}$ is unitary. Assume our quantum walk begins at
some vertex $j$, so $\ket{\psi(0)} = \ket{j}$. If at some time $t$ we measure the state
$\ket{\psi(t)}$ in the vertex basis $\{\ket{1}, \ldots, \ket{n} \}$, then the probability of
outcome $\ket{k}$ is
\[
	\abs{\braket{k}{\psi(t)}}^2 = \abs{\bra{k}e^{iLt}\ket{j}}^2 =  \abs{(e^{iLt})_{kj}}^2.
\]
If this probability distribution is uniform over all vertices $k$, and this is true for all
initial states $\ket{j}$, then the graph is said to have uniform mixing.

In fact we will define the unitary evolution using the adjacency matrix rather than the
Laplacian. When $X$ is regular, the resulting states differ only by an overall phase factor,
and this does not affect uniform mixing.

In this paper we focus on the question of which graphs
have uniform mixing: can a quantum walk beginning at a single vertex result in a state whose
probability distribution is uniform over all vertices? If this is possible at some small time
$t$, then the behaviour is quite different from a classical walk, which typically approaches
the uniform distribution as $t \to \infty$.

Moore and Russell \cite{moore02} in 2001 gave the first example of uniform mixing, showing
that the $n$-cube has uniform mixing at time $t = \pi/4$. Since then a large body of work
has been produced to study this problem. We summarize the conclusions. The following
graphs do admit uniform mixing:
\begin{enumerate}[(a)]
	\item 
	The complete graphs $K_2$, $K_3$, and $K_4$ \cite{ahmadi03}.
	\item
	The Hamming graphs $H(n,q)$, $q \in \{2,3,4\}$ \cite{moore02} \cite{carlson07}.
	\item
	The Cartesian product of any two graphs that admit uniform mixing at the same
	time.
\end{enumerate}
Note that since the Hamming graph $H(n,q)$ is the $n$-th Cartesian power of $K_q$,
the results in (b) follow from the results in (a). A number of Cayley graphs for $\ints_2^d$ admit uniform
mixing, but no classification is yet known. Some work in this direction appears
in \cite{best08}.

On the other hand, some graphs are known not to admit uniform mixing: 
\begin{enumerate}[(a)]
	\item 
	$K_n$ for $n > 4$.
	\item
	$H(n,q)$ for $q > 4$.
	\item
	Cycles $C_n$ for $n = 2^uq$, where $q = 1$ and $u \geq 3$ or $q \cong 3 \mod 4$ \cite{adamczak07}.
	\item
	The cycle $C_5$ \cite{carlson07}.
	\item
	Complete multipartite graphs (except $C_4$) \cite{ahmadi03}.
	\item
	The transpositions Cayley graph $X(S_n,\{\text{transpositions}\})$ 
	\cite{gerhardt03}.
\end{enumerate}

An obvious question raised by these results is which cycles admit uniform mixing.
In this work we show that the cycle $C_n$ does not admit uniform mixing if
$n$ is even and greater than four, or if $n$ is a prime greater than three.
We see that if a bipartite graph on $n$ vertices admits uniform mixing then
$n$ must be divisible by four; if in addition the graph is regular then
$n$ must be the sum of two integer squares.

The graphs we have that do admit uniform mixing are highly regular, and this
leads us to ask which strongly regular graphs admit uniform mixing. We use work of Chan \cite{chan08} to show that the only strongly regular graphs
that admit instantaneous uniform mixing are the Paley graph of order 9 and certain graphs
corresponding to regular symmetric Hadamard matrices with constant diagonal.

Since uniform mixing seems to be rare, we consider a relaxed version, called $\epsilon$-uniform mixing, and we demonstrate that this does take place on all cycles of prime length.

\section{Uniform mixing}
Let $A$ denote the adjacency matrix of a graph $X$ on $n$ vertices. The transition 
operator $U(t)$ defined by
\[
	U(t) = e^{itA}.
\]
determines a continuous quantum walk. It is straightforward to observe that the transition matrix satisfies the following properties.

\begin{enumerate}[(i)]
\item{$U(t)^{T} = U(t).$}
\item{$\overline{U(t)} = U(-t).$}
\item{$U(t)$ is unitary.}
\end{enumerate}

Suppose that $A$ and $B$ are two $m \times n$ matrices. The \textsl{Schur product} $A \circ B$ is the $m \times n$ matrix given by
\[
(A \circ B)_{j,k} = A_{j,k} B_{j,k}.
\]
We say the graph $X$ admits \textsl{uniform mixing} at time $t$ if and only if
\[
U(t) \circ U(-t) = \frac{1}{n}J
\]
where $J$ is the $n \times n$ all-ones matrix.

Let $|| \cdot ||$ denote the Frobenius norm. More explicitly, for two $n \times n$ matrices $A$ and $B$ we have
\[
|| A - B || = \sqrt{\sum_{j = 1}^{n}\sum_{j=1}^{n} \left| A_{j,k} - B_{j,k} \right|^{2}}.
\]
We say that a graph $X$ admits $\epsilon$-uniform mixing if and only if for every $\epsilon > 0$, there exists some time $t$ such that the corresponding mixing matrix $U(t)$ satisfies
\[
|| U(t) \circ U(t)^{*} - \frac{1}{n}J|| < \epsilon. 
\]

It is useful to note the following lemma, which relates uniform mixing on a graph and its complement at certain times.

\begin{lemma}
	Let $X$ denote a regular graph on $n$ vertices, and let $t$ denote an integer 
	multiple of $2 \pi/n$. At time $t$, uniform mixing occurs on $X$ if and only 
	if it occurs on the complement $\overline{X}$.
\end{lemma}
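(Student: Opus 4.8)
The plan is to exploit the regularity of $X$ to relate the transition operator of $\overline{X}$ to that of $X$ by a single scalar phase, and then to observe that this phase cancels in the Schur product. First I would use that $X$ is $k$-regular, so that $A\one = k\one$ and hence $AJ = JA = kJ$; thus $A$, $J$ and $I$ pairwise commute. Writing the adjacency matrix of the complement as $\overline{A} = J - I - A$, the commutativity lets me factor the matrix exponential as $e^{it\overline{A}} = e^{itJ}\,e^{-itI}\,e^{-itA}$.

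The next step is to evaluate each factor. Since $e^{-itI} = e^{-it}I$ and $e^{-itA}$ is exactly the transition operator $U(-t)$ of $X$, everything reduces to understanding $e^{itJ}$. Here I would use that $P = \tfrac{1}{n}J$ is idempotent, so that $e^{itJ} = e^{itnP} = I + (e^{itn}-1)P$. The hypothesis that $t$ is an integer multiple of $2\pi/n$ is precisely what makes $e^{itn} = 1$, and therefore $e^{itJ} = I$. Combining the three factors gives $U_{\overline{X}}(t) = e^{-it}\,U(-t)$, i.e.\ the two transition operators agree up to the global scalar $e^{-it}$.

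Finally I would compute the mixing matrix for $\overline{X}$. Applying the same identity at $-t$ (which is also an integer multiple of $2\pi/n$) gives $U_{\overline{X}}(-t) = e^{it}\,U(t)$, so that
\[
U_{\overline{X}}(t)\circ U_{\overline{X}}(-t) = \bigl(e^{-it}U(-t)\bigr)\circ\bigl(e^{it}U(t)\bigr) = U(-t)\circ U(t).
\]
Because the Schur product is symmetric, the right-hand side equals $U(t)\circ U(-t)$, so the mixing matrices of $X$ and $\overline{X}$ at time $t$ are literally equal; in particular one equals $\tfrac{1}{n}J$ exactly when the other does.

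I do not expect a serious obstacle here: the only subtlety is justifying the factorization of $e^{it\overline{A}}$, which rests entirely on the commutativity of $A$, $J$ and $I$ coming from regularity, together with the clean evaluation of $e^{itJ}$ at the special times $t \in \tfrac{2\pi}{n}\ints$. The role of the hypothesis on $t$ is exactly to force the all-ones component of the exponential to be trivial; without it, $e^{itJ}$ would contribute a nontrivial rank-one perturbation that would spoil the correspondence.
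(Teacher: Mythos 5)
Your proof is correct and follows essentially the same route as the paper: both decompose $\overline{A}=J-I-A$, use regularity to commute $A$ with $J$, and evaluate the $J$-part spectrally so that at times $t\in\tfrac{2\pi}{n}\ints$ the transition operator of $\overline{X}$ becomes a unimodular scalar multiple of that of $X$, which cancels in the Schur product. Your write-up is in fact slightly cleaner in spelling out the final cancellation step explicitly.
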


\proof
Note that
\begin{multline*}
	e^{it\bar{A}} = e^{it(J-I-A)} = e^{it(J-I)}e^{-itA} \\ = 		
		\left(e^{i(n-1)t}\left(\frac{1}{n}J\right) + 	
		e^{-it}\left(I-\frac{1}{n}J\right)\right)e^{-iAt}.
\end{multline*}
If $t$ denote an integer multiple of $2 \pi/n$, then the above equation reduces to
\[
	e^{it\bar{A}} = e^{-it}e^{itA},
\]
which implies that the complement of $X$ has uniform mixing if and only if $X$ does.\qed

In Section~\ref{srg_section}, we see that the previous lemma applies to all cases of 
strongly regular graphs that admit uniform mixing.

\section{Type-II Matrices}
Suppose that $A$ is an $n \times m$ matrix with all non-zero entries. We say that the \textsl{Schur inverse} of $A$ is the matrix given by
\[
A^{(-)}_{j,k} =  \frac{1}{A_{j,k}}.
\]
We say that a complex $n \times n$ matrix $H$ is a \textsl{type-II matrix} if and only if
\[
H H^{(-)T} = nI.
\]
\begin{lemma}
The graph $X$ admits uniform mixing at time $t$ if and only if $\sqrt{n}U(t)$ is a type-II matrix.
\end{lemma}
\begin{proof}
We note that a complex number $x$ satisfies $|x| = 1$ if and only if $\bar{x} = x^{-1}$. Therefore the transition matrix satisfies 
\[
U(t) \circ U(t)^{*} = \frac{1}{n}J \quad \text{if and only if} \quad \sqrt{n}U(t)^{*} = \frac{1}{\sqrt{n}} U(t)^{(-)T}. 
\]
Since $U(t)$ is a symmetric unitary matrix, it follows that $U(t)^{(-)T} = U(t)^{-1}$. We conclude that $U(t)$ is flat if and only if $\sqrt{n} U(t)$ is type-II. \qed
\end{proof}

More specifically, a flat type-II matrix is called a \textsl{complex Hadamard matrix}. Therefore, we have the following.

\begin{corollary}
The graph $X$ admits uniform mixing at time $t$ if and only if $\sqrt{n}U(t)$ is a complex Hadamard matrix.\qed
\end{corollary}

For the known examples of graphs that admit uniform mixing, the entries of $\sqrt{n}U(t)$ at the corresponding time are roots of unity. Such matrices are called \textsl{Butson-type} complex Hadamard matrices. They live in a more general family of complex Hadamard matrices whose entries are all algebraic numbers. 

We recall a useful formulation of the Gelfond-Schneider Theorem due to Michel Waldschmidt \cite{burger04}.

\begin{theorem}
\label{gelfond_schneider}
If $x$ and $y$ are two non-zero complex numbers with $x$ irrational, then at least one of the numbers $x$, $e^{y}$, or $e^{xy}$ is transcendental.\qed
\end{theorem}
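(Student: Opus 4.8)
The plan is to derive the statement from the classical Gelfond--Schneider theorem, using its equivalent \emph{homogeneous} (two-logarithm) formulation: if $\lambda_1,\lambda_2$ are nonzero complex numbers with $e^{\lambda_1}$ and $e^{\lambda_2}$ both algebraic, and $\lambda_1,\lambda_2$ are linearly independent over $\rats$, then they are linearly independent over the field $\overline{\rats}$ of algebraic numbers. I would argue by contradiction: assume all three of $x$, $e^y$, and $e^{xy}$ are algebraic, and show this forces $x$ to be rational.

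First I would set $\lambda_1 = y$ and $\lambda_2 = xy$. Since $y \neq 0$ and $x \neq 0$, both are nonzero, and by assumption $e^{\lambda_1} = e^y$ and $e^{\lambda_2} = e^{xy}$ are algebraic, so $\lambda_1,\lambda_2$ are nonzero logarithms of algebraic numbers. Next I would verify that they are linearly independent over $\rats$: any relation $p\lambda_1 + q\lambda_2 = 0$ with $p,q \in \rats$ becomes $y(p + qx) = 0$, and since $y \neq 0$ this gives $p + qx = 0$; if $q \neq 0$ this makes $x = -p/q$ rational, contrary to hypothesis, so $p = q = 0$. Hence $\lambda_1,\lambda_2$ are $\rats$-linearly independent, and the homogeneous Gelfond--Schneider theorem then gives that they are linearly independent over $\overline{\rats}$. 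But the assumption that $x$ is algebraic yields the nontrivial $\overline{\rats}$-linear relation $x\lambda_1 - \lambda_2 = 0$, a contradiction. Therefore at least one of $x$, $e^y$, $e^{xy}$ must be transcendental.

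The one place requiring care is the choice of formulation of Gelfond--Schneider. The bare statement that $\alpha^{\beta}$ is transcendental for algebraic $\alpha \neq 0,1$ and algebraic irrational $\beta$ does not by itself cover the degenerate case $e^y = 1$, where the natural base would be $\alpha = 1$, which is excluded; treating that case directly would need the sharper symmetric, linear-forms-in-logarithms version. The advantage of taking $\lambda_1 = y$ itself, rather than passing through a branch of $\log(e^y)$, is that this degenerate case is absorbed automatically: the argument only ever uses that $y$ and $xy$ are logarithms of algebraic numbers, never that $e^y \neq 1$. Thus the main (and essentially the only) obstacle is packaged entirely into invoking the correct equivalent form of the theorem; the remaining linear-algebra bookkeeping is routine.
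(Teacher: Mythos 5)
Your argument is correct, but note that the paper does not actually prove this statement: it is quoted as a known reformulation of the Gelfond--Schneider theorem attributed to Waldschmidt, with only a citation and no derivation. So there is no internal proof to compare against; what you have supplied is a genuine derivation of the cited equivalence. Your route --- reducing to the homogeneous two-logarithm form (if $\lambda_1,\lambda_2$ are logarithms of algebraic numbers that are linearly independent over $\rats$, they are linearly independent over $\overline{\rats}$) with $\lambda_1 = y$, $\lambda_2 = xy$ --- is the standard way to see that the three formulations are equivalent, and your bookkeeping is right: $\rats$-independence follows exactly from the irrationality of $x$ and $y \neq 0$, and the algebraicity of $x$ would produce the forbidden relation $x\lambda_1 - \lambda_2 = 0$. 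You are also right to flag the degenerate case $e^{y}=1$ (i.e., $y \in 2\pi i\ints\setminus\{0\}$): the naive form ``$\alpha^{\beta}$ is transcendental for algebraic $\alpha \neq 0,1$ and algebraic irrational $\beta$'' excludes $\alpha = 1$, whereas working directly with the logarithm $y$ rather than with a branch of $\log(e^{y})$ absorbs that case; this is precisely the point of Waldschmidt's formulation, and your proof makes the equivalence explicit where the paper merely asserts it.
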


The above theorem is crucial in the proof of the following result.

\begin{theorem}
\label{no_algebraic}
Let $X$ denote a graph. If at some non-zero time $t$ all entries of the transition matrix $U(t)$ are algebraic numbers, then the ratio of any two non-zero eigenvalues of $X$ must be rational.
\end{theorem}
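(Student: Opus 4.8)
We need to show: if all entries of $U(t) = e^{itA}$ are algebraic at some time $t$, then the ratio of any two eigenvalues of the graph $X$ is rational. Let me set up the framework.

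Let $A$ be the adjacency matrix with eigenvalues $\lambda_1, \ldots, \lambda_r$ (distinct). Then $e^{itA}$ has a spectral decomposition: if $E_1, \ldots, E_r$ are the orthogonal projections onto eigenspaces, then $A = \sum_j \lambda_j E_j$ and $U(t) = \sum_j e^{it\lambda_j} E_j$. The entries of $U(t)$ are $\sum_j e^{it\lambda_j} (E_j)_{kl}$. The eigenvalues $\lambda_j$ are algebraic (roots of the integer characteristic polynomial), and the projections $E_j$ have algebraic entries too.

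**The plan.** I want to connect the algebraicity of entries of $U(t)$ to constraints on the $\lambda_j$ via Gelfond–Schneider. The key quantities are the numbers $e^{it\lambda_j}$.

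Let me think about what we can extract. The idea: the entries of $U(t)$ being algebraic should force the individual quantities $e^{it\lambda_j}$ to be algebraic. Why? Consider that $U(t), U(2t), U(3t), \ldots$ all have algebraic entries (since $U(mt) = U(t)^m$). For each power $m$, the entry is $\sum_j e^{imt\lambda_j}(E_j)_{kl}$. Actually, a cleaner approach: the eigenvalues of $U(t)$ are exactly the numbers $e^{it\lambda_j}$. Since $U(t)$ has algebraic entries, its characteristic polynomial has algebraic coefficients, so its eigenvalues $e^{it\lambda_j}$ are algebraic.

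**Applying Gelfond–Schneider.** So now I know each $e^{it\lambda_j}$ is algebraic (and nonzero, being on the unit circle). Take two eigenvalues $\lambda_j, \lambda_k$ with $\lambda_k \neq 0$, and set $r = \lambda_j/\lambda_k$. I want to show $r$ is rational. Suppose for contradiction that $r$ is irrational. I aim to apply the Gelfond–Schneider theorem (Theorem \ref{gelfond_schneider}) to deduce transcendence. Set $x = r = \lambda_j/\lambda_k$ (irrational by assumption) and choose $y$ so that $e^y = e^{it\lambda_k}$, i.e. $y = it\lambda_k$. Then $e^{xy} = e^{it\lambda_j}$. The theorem says at least one of $x, e^y, e^{xy}$ is transcendental. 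But $e^y = e^{it\lambda_k}$ and $e^{xy} = e^{it\lambda_j}$ are both algebraic, so $x = \lambda_j/\lambda_k$ must be transcendental.

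**The obstacle and resolution.** Now $\lambda_j/\lambda_k$ is a ratio of algebraic numbers, hence algebraic — contradicting that it is transcendental. Therefore $r$ cannot be irrational, so $\lambda_j/\lambda_k$ is rational. The main subtlety I expect is the branch-of-logarithm bookkeeping: I must ensure $y = it\lambda_k$ is a legitimate choice with $e^y$ genuinely equal to the algebraic number $e^{it\lambda_k}$, and that $x$ being irrational is the contrapositive setup Gelfond–Schneider needs. One must also dispose of the case $\lambda_k = 0$ separately (then $\lambda_j/\lambda_k$ is undefined, so the ``ratio'' claim is vacuous or interpreted via the reciprocal). The cleanest packaging is to argue the contrapositive: assume some ratio is irrational, name it $x$, and extract a transcendence contradiction exactly as above.
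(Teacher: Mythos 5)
Your proof is correct and follows essentially the same route as the paper: identify the eigenvalues of $U(t)$ as $e^{it\theta_k}$ via the spectral decomposition, deduce their algebraicity from the algebraic entries of $U(t)$, and apply Gelfond--Schneider with $x=\theta_r/\theta_s$ and $y=it\theta_s$ to reach a contradiction. Your extra remarks on the $\lambda_k=0$ case and the choice of $y$ are reasonable housekeeping that the paper leaves implicit.
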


\begin{proof}
Let $A$ denote the adjacency matrix of $X$, and let $\{\theta_0, \dots, \theta_d\}$ denote
the eigenvalues of $A$. We consider a fixed pair of eigenvalues $\theta_r$ and $\theta_s$.
Since $A$ is an integer matrix, the characteristic polynomial of $A$ is a monic polynomial with integer coefficients. Therefore each eigenvalue of $A$ is an algebraic integer. Furthermore, since $A$ is symmetric, we decompose $A$ as
\[
	A = \sum_{k = 0}^{d} \theta_k E_k,
\]
where each $E_k$ is orthogonal projection onto the $k$-th eigenspace. This directly 
implies that
\begin{equation}
	U(t) = \sum_{k=0}^{d} e^{\theta_k it} E_k.
\end{equation}
From this we see that the eigenvalues of $U(t)$ are $\{e^{it \theta_0}, \dots, e^{it
\theta_d}\}$. If all entries of $U(t)$ are algebraic, then the eigenvalues of $U(t)$ must be
algebraic. If we further suppose that $\theta_r / \theta_s$ is irrational, then Theorem
\ref{gelfond_schneider} implies that one of
\[
	\{ \theta_r / \theta_s, e^{it \theta_s}, e^{i t \theta_s (\theta_r/\theta_s)} \}
\]
must be transcendental, which is a contradiction. Therefore $\theta_r / \theta_s$ 
must be rational for every pair of distinct eigenvalues $\theta_r$ and $\theta_s$ 
of $X$.\qed
\end{proof}

Restricting our consideration to regular graphs yields the following corollary.

\begin{corollary}
\label{no_algebraic_reg}
	Suppose that $X$ is a regular graph with transition matrix $U(t)$. Further suppose that $H$ is a matrix with all algebraic entries such that
	\[
	U(t) = \gamma H,
	\]
	for some non-zero time $t$ and some $\gamma$ in $\cx$. Then $X$ must have all integral eigenvalues.
\end{corollary}

\proof
By a well-known result from linear algebra, we know that for the complex matrix $itA$ we have
\[
\det(U(t)) = \det(e^{itA}) = e^{\tr(itA)} = 1.
\]
Under our assumption that $U(t) = \gamma H$, this result implies that
\[
\gamma^n \det(H) = 1.
\]
Since $H$ has all algebraic entries, it follows that $\det(H)$ is an algebraic number. This implies that $\gamma$ is algebraic, and hence all of the entries of $U(t)$ must be algebraic. Theorem \ref{no_algebraic} shows that the ratio of any two eigenvalues must be rational. Since $X$ is regular, the largest eigenvalue is an integer, and so the rest of the eigenvalues of $X$ must be rational. Recall that the eigenvalues of $A$ are algebraic integers. The only rational algebraic integers are the integers themselves, and so we conclude that all of the eigenvalues of $X$ must be integers.\qed

\section{Complex Hadamards and SRGs}
\label{srg_section}
Let $X$ be a strongly regular graph with parameters $(n,k,\la,\mu)$ and let $A$ be 
its adjacency matrix, with spectral decomposition
\[
A = kE_k + \theta E_\theta + \tau E_\tau,
\]
where $k > \lambda \geq \tau$.

The following result of Chan \cite{chan08} is instrumental in our classification of strongly regular graphs that admit uniform mixing.

\begin{theorem}
	\label{thm:chan}
	Let $A$ be the adjacency matrix of a primitive strongly regular graph $X$ 
	with eigenvalues $k$, $\th$, $\tau$, and $W = I + xA +y\bar{A}$. If $W$ is 
	a complex Hadamard matrix, then $X$ or $\bar{X}$ has one of the following 
	parameter sets $(n,k,\la,\mu)$:
	\begin{itemize}
		\item[(i)] $(4\th^2,2\th^2- \th, \th^2-\th,\th^2-\th)$ 
		\item[(ii)] $(4\th^2,2\th^2+ \th, \th^2+\th,\th^2+\th)$ 
		\item[(iii)] $(4\th^2-1,2\th^2, \th^2,\th^2)$ 
		\item[(iv)] $(4\th^2+4\th+1,2\th^2+2\th, \th^2+\th-1,\th^2+\th)$ 
		\item[(v)] $(4\th^2+4\th+2,2\th^2+ \th, \th^2-1,\th^2)$ 
	\end{itemize}
\end{theorem}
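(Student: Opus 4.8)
The plan is to translate the two complex Hadamard conditions on $W = I + xA + y\bar{A}$ into equations on the eigenvalues of the Bose--Mesner algebra $\langle I, A, \bar{A}\rangle$, and then to extract the parameter families by combining these equations with the integrality constraints on strongly regular graphs. The Schur condition $W \circ \bar{W} = J$ is immediate: the entries of $W$ are $1$ on the diagonal, $x$ on the edges of $X$, and $y$ on the edges of $\bar{X}$, so $W \circ \bar{W} = J$ holds exactly when $|x| = |y| = 1$. Writing $x = e^{i\al}$ and $y = e^{i\be}$, I would diagonalise $W$ using the spectral decomposition $A = kE_k + \th E_\th + \tau E_\tau$ and $\bar{A} = (n-1-k)E_k - (1+\th)E_\th - (1+\tau)E_\tau$, obtaining $W = \La_k E_k + \La_\th E_\th + \La_\tau E_\tau$ with
\[ \La_k = 1 + xk + y(n-1-k), \qquad \La_\th = (1-y) + (x-y)\th, \qquad \La_\tau = (1-y) + (x-y)\tau. \]
Since $W$ is symmetric and lies in a commutative algebra, it is normal and $W^* = \bar{W}$, so the unitarity condition $WW^* = nI$ is equivalent to $|\La_k|^2 = |\La_\th|^2 = |\La_\tau|^2 = n$.

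Next I would exploit the two conditions coming from the restricted eigenspaces. Setting $u = 1-y$ and $v = x-y$ and subtracting $|\La_\tau|^2 = n$ from $|\La_\th|^2 = n$, the factor $\th - \tau$ is nonzero by primitivity and cancels; substituting the result back into one of the equations and using $\th + \tau = \la - \mu$ and $\th\tau = \mu - k$ yields the clean pair
\[ 2\Re(u\bar{v}) + (\th + \tau)\,|v|^2 = 0, \qquad |1-y|^2 + (k-\mu)\,|x-y|^2 = n. \]
In terms of the phases the first of these becomes
\[ \cos\al - \cos\be + (1 + \la - \mu)\bigl(1 - \cos(\al - \be)\bigr) = 0, \]
and the second, together with the trivial-eigenspace condition $|\La_k|^2 = n$, gives two further real equations. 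As a system in the two unknowns $\al, \be$ this is overdetermined, so solvability forces a relation among $(n, k, \la, \mu)$; this is the mechanism that produces the finite list.

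The heart of the argument, and what I expect to be the main obstacle, is to turn these equations into the five parameter families using the arithmetic of strongly regular graphs. First I would dispose of irrational eigenvalues: for a non-square conference graph $\th$ and $\tau$ are conjugate quadratic irrationals with equal multiplicities, and I expect the modulus equations, read against the requirement that the multiplicities $m_\th, m_\tau$ be positive integers, to be inconsistent, leaving only integral $\th, \tau$. With $\th, \tau \in \ints$, the displayed phase equation, constrained by the fact that $\cos\al$, $\cos\be$ and $\cos(\al - \be)$ are cosines of a single pair of angles (so each lies in $[-1,1]$ and they satisfy the addition law), should force $\la - \mu = \th + \tau \in \{0, -1, -2\}$. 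Because passing from $X$ to $\bar{X}$ simply interchanges $x$ and $y$ and sends $\la - \mu$ to $-2 - (\la - \mu)$, I may assume after complementation that $\la - \mu \in \{0, -1\}$, that is $\tau = -\th$ or $\tau = -\th - 1$; this is exactly where the ``$X$ or $\bar{X}$'' in the statement comes from.

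Finally, in each of the two surviving cases I would substitute $\tau = -\th$ (respectively $\tau = -\th-1$) into the remaining two equations $|\La_k|^2 = n$ and $|1-y|^2 + (k-\mu)|x-y|^2 = n$, and solve for $k$ and $n$ as polynomials in $\th$, using the standard identity $k(k-\la-1) = (n-1-k)\mu$ and the positivity and integrality of the multiplicities to discard spurious solutions. The case $\tau = -\th$ should split into the three families (i)--(iii) and the case $\tau = -\th-1$ into (iv)--(v). I anticipate that this last step is the genuinely delicate one: the coupled trigonometric equations must be solved exactly while respecting both the interval constraints on the cosines and the arithmetic constraints on the parameters, and it is here that the precise coefficients in the five families are pinned down and all other parameter sets eliminated.
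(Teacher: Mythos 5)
Note first that the paper does not prove this statement: it is quoted verbatim from Chan \cite{chan08}, so there is no internal proof to compare against. Judged on its own, your setup is the right one and agrees with Chan's: the Schur condition forces $|x|=|y|=1$, the spectral decomposition gives the three eigenvalues $\La_k,\La_\th,\La_\tau$ exactly as you write them, and subtracting $|\La_\th|^2=n$ from $|\La_\tau|^2=n$ and cancelling $\th-\tau$ does yield the two equations $2\Re(u\bar v)+(\th+\tau)|v|^2=0$ and $|1-y|^2+(k-\mu)|x-y|^2=n$; I checked these and they are correct. The complementation symmetry sending $\la-\mu$ to $-2-(\la-\mu)$ is also the right explanation for the ``$X$ or $\bar X$'' in the statement.

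There are, however, two genuine problems. First, your plan to ``dispose of irrational eigenvalues'' by showing the modulus equations are inconsistent for non-square conference graphs is wrong: they are consistent. For any conference graph one has $\th+\tau=-1$, so your first equation forces $y=\bar x$, and then $|1-\bar x|^2+\tfrac{n-1}{4}|x-\bar x|^2=n$ reduces to $(n-1)\cos^2\al+2\cos\al-1=0$, giving $\cos\al=\tfrac{-1+\sqrt n}{n-1}$ and $\La_k=\pm\sqrt n$; every conference graph, square order or not, yields a complex Hadamard matrix of this form, and these are exactly the graphs of family (iv) with $\th=(\sqrt n-1)/2$ possibly irrational (the paper itself treats non-integer $\th$ in its conference-graph section). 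The correct dichotomy is ``conference graph, which lands in (iv)'' versus ``integral eigenvalues.'' Second, the two steps that constitute the actual content of the theorem are only asserted. Your phase equation gives $|\sin\tfrac{\al+\be}{2}|=|1+\th+\tau|\,|\sin\tfrac{\al-\be}{2}|$, and combined with $|u|^2\le 4$ and the modulus equation this bounds $(1+\th+\tau)^2\le 4(k-\mu)/(n-4)$; to conclude $\th+\tau\in\{0,-1,-2\}$ you still need an inequality of the form $k-\mu<n-4$ for primitive strongly regular graphs together with a treatment of the boundary cases, none of which is supplied. Likewise the final extraction of the precise parameter sets (i)--(v) from $\tau=-\th$ or $\tau=-\th-1$ together with $|\La_k|^2=n$ is where essentially all of the work in Chan's paper lies, and ``I anticipate that this last step is the genuinely delicate one'' is an accurate self-assessment rather than a proof. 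As it stands you have a correct first page of the argument, one incorrect structural claim, and the decisive computations missing.
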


Moreover, Chan determines the possible values of $x$ and $y$ that can occur in each of these cases.

\section{Uniform mixing on SRGs}

If we suppose a strongly regular graph $X$ admits uniform mixing, then there are 
numbers $c$, $x$ and $y$ with $|c| = |x| = |y| = 1$ such that
\[
	c\sqrt{n}U(t) = I + xA + y\bar{A},
\]
and $c \sqrt{n} U(t)$ is a complex Hadamard matrix. Applying Chan's results in the previous
section, we obtain the following classification of primitive strongly regular graphs that
admit uniform mixing. Our main result in this section follows.

\begin{theorem}
	\label{srg_theorem}
	A primitive strongly regular graph $X$ with adjacency matrix $A$ has uniform 
	mixing if and only if one of the following holds
	\begin{itemize}
		\item [(a)] $J-2A$ is a regular symmetric Hadamard matrix of order $4\theta^2$ 
		with constant diagonal and positive row sum and $\theta$ is even.
		\item [(b)] $J- 2A-2I$ is a regular symmetric Hadamard matrix of order $4\theta^2$ 
		with constant diagonal and positive row sum and $\theta$ is odd.
		\item [(c)] The Paley graph of order 9, which has parameters $(9,4,1,2)$.
	\end{itemize}
\end{theorem}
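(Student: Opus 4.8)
Here is how I would approach the proof.

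The plan is to run the argument in both directions around Chan's theorem. For the forward direction I would start from the displayed observation preceding the statement: if $X$ admits uniform mixing at time $t$, then $\sqrt{n}\,U(t)$ is a complex Hadamard matrix, and since $X$ is strongly regular, $U(t)$ lies in the Bose--Mesner algebra $\langle I,A,J\rangle=\langle I,A,\bar{A}\rangle$. Choosing the unit scalar $c$ so that the coefficient of $I$ becomes $1$, and using that multiplying a complex Hadamard matrix by a scalar of modulus one preserves both defining conditions, I would write $W=c\sqrt{n}\,U(t)=I+xA+y\bar{A}$ and note that $W\circ\bar{W}=J$ forces $|x|=|y|=1$. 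Theorem~\ref{thm:chan} then applies and tells me that $X$ or $\bar{X}$ falls into one of the five parameter families (i)--(v); in particular every candidate has integral eigenvalues.

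Next I would turn uniform mixing into an explicit system. Writing $U(t)=\alpha I+\beta A+\delta J$ with $\alpha,\beta,\delta$ read off from the spectral idempotents and the phases $p=e^{i\theta t}$, $q=e^{i\tau t}$, $r=e^{ikt}$, uniform mixing at $t$ is equivalent to the three distinct entries $\alpha+\delta$ (diagonal), $\beta+\delta$ (adjacent) and $\delta$ (non-adjacent) all having modulus $1/\sqrt{n}$. Since Chan moreover pins down the admissible pairs $(x,y)$ in each family, the remaining question is purely one of realizability: can the three eigenvalue-phases $r,p,q$, which are governed by the single real parameter $t$ through the integers $k,\theta,\tau$, be made consistent with one of Chan's pairs? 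This is where the number-theoretic content lives.

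For the families with eigenvalues $\pm\theta$ and $n=4\theta^2$ (families (i) and (ii) together with their complements), the binding modulus conditions reduce to alignment conditions on $p,q,r$---in the cleanest case, family (i), a tight instance of the triangle inequality, where the coefficient-moduli of the relevant combination sum to exactly $\sqrt{n}$---and these force $e^{i\theta t}=\pm i$ together with a congruence on $k$ modulo $4$, that is, a parity condition on $\theta$. At such a $t$ one checks directly that $\sqrt{n}\,U(t)$ is a modulus-one scalar times a $\pm1$ matrix, so that $J-2A$ (respectively $J-2A-2I$) is a regular symmetric Hadamard matrix with constant diagonal; sorting these by which normalization carries the positive row sum and which parity survives is what yields exactly alternatives (a) and (b). The converse is then immediate: one exhibits a mixing time such as $t=\pi/(2\theta)$ and verifies the three moduli. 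For family (iv) I would show the phase constraints are consistent with Chan's admissible $(x,y)$ only when $\theta=1$, namely the Paley graph of order nine, giving (c); and for families (iii) and (v), where $n$ is not a perfect square, I would show the modulus equations have no real solution $t$.

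The main obstacle is the case analysis carried out against Chan's explicit $(x,y)$ data, and it is really two obstacles. First, ruling out families (iii) and (v) entirely and proving that family (iv) collapses to the single graph $\theta=1$ requires exploiting the rigidity that all three eigenvalue-phases arise from one real $t$; the required congruence or impossibility among the arguments $\theta t,\tau t,kt$ is not automatic, and this is where I would expect to spend the most effort. Second, the bookkeeping that separates (a) from (b)---tracking the parity of $\theta$ together with the sign of the row sum and the $\pm1$ on the diagonal, so as to land on exactly the two normalizations $J-2A$ and $J-2A-2I$---is delicate, and the complement lemma must be invoked to be certain that no mixing graph is missed or double-counted.
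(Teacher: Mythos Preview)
Your overall strategy matches the paper's: invoke Chan's classification, write down the three characterizing equations coming from the spectral idempotents, and then in each of the five families test whether Chan's admissible $(x,y)$ pairs can be realised by a single real $t$. You correctly identify that families (i) and (ii) reduce to a parity condition on $\theta$ and that family (iv) should collapse to $\theta=1$.

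However, your treatment of the exclusion cases has a genuine gap. The sentence ``where $n$ is not a perfect square, I would show the modulus equations have no real solution $t$'' is not what actually happens: Chan's pairs $(x,y)$ already satisfy all the modulus constraints, so there \emph{are} complex Hadamard matrices of the form $I+xA+y\bar A$ in the scheme; what fails is the simultaneous realisability of the three \emph{arguments} $kt$, $\theta t$, $\tau t$ from one real parameter. The paper needs three distinct devices here that your plan does not anticipate:
\begin{itemize}
\item For family (iii), in one $(x,y)$ sub-case the characterizing equations force $e^{i\theta t}$ and $c$ into explicit algebraic form, and the contradiction comes from an asymptotic comparison of $\arg(ce^{i2\theta^2 t})$ computed two ways, with an explicit error bound (the paper proves a separate lemma for this) to make the estimate rigorous for all $\theta$. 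This is an argument-of-a-complex-number calculation, not a modulus obstruction, and it is the most delicate step in the whole proof.
\item For the other sub-case of (iii) and for family (v), the equations force $e^{it}$ to be a root of unity of known order, while a second equation pins the algebraic degree of a related power to exactly four; one then checks by hand that no primitive $m$-th root of unity with $\phi(m)=4$ has the required real or imaginary part.
\item For the conference graphs (iv) one must also treat the case of non-integral $\theta$ (i.e.\ $n$ not a perfect square), and there the paper invokes Gelfond--Schneider to obtain a transcendence contradiction; this does not reduce to any congruence among integer phases.
\end{itemize}
Without these specific mechanisms the case analysis cannot be closed. Your proposal is structurally on target but does not yet contain the ideas that finish families (iii), (v), and the irrational-$\theta$ part of (iv).
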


To prove this theorem, we consider each possible case of Theorem \ref{thm:chan} separately. First we lay some groundwork that applies to all cases. Recall that we have
\[
	e^{iAt} = e^{ikt}E_k + e^{i\th t} E_\th + e^{i\tau t} E_\tau,
\]
 and so we can expand both sides of this equation in terms of the spectral decomposition. Comparing the expressions for $E_k$, $E_\th$ and $E_\tau$ on both sides, we get the following system of equations:
\begin{align}
ce^{ikt} \sqrt{n} & = 1 + xk + y(v-k-1) \label{eqn:flatsrg1} \\
ce^{i\th t} \sqrt{n} & = 1 + x\th + y(-\th-1) \label{eqn:flatsrg2} \\
ce^{i\tau t} \sqrt{n} & = 1 + x\tau + y(-\tau-1)\label{eqn:flatsrg3}.
\end{align}
These equations are the \textsl{characterizing equations}. The graph $X$ admits uniform mixing if and only if a solution to the characterizing equations exists in $t$, $x$, $y$ and $c$ with $|x| = |y| = |c| = 1$.

Chan shows \cite[Lemma~2.2]{chan08} that if $X$ has at least $5$ vertices,  then $\th + \tau \in \{0,1,2\}$. Moreover, for each value of $\th+\tau$, Chan finds parameter sets $(n,k,\la,\mu)$ in Theorem \ref{thm:chan} that  can occur and also determines the values of $x$ and $y$ that can occur in the characterizing equations.  

In each case, we must also consider if the complement has uniform mixing $ce^{it\bar{A}} = I + xA + y\bar{A}$. The eigenvalues of the complement are
\[
\{v-k-1,-\th-1,-\tau-1\}.
\]
The characterizing equations for the complement of $X$, written in terms of the parameters of $X$, are:
\begin{align}
ce^{i(n-k-1)t} \sqrt{n} & = 1 + xk + y(n-k-1) \label{eqn:flatsrgc1} \\
ce^{i(-\th-1) t} \sqrt{n} & = 1 + x\th + y(-\th-1) \label{eqn:flatsrgc2} \\
ce^{i(-\tau-1) t} \sqrt{n} & = 1 + x\tau + y(-\tau-1)\label{eqn:flatsrgc3} .
\end{align}

\subsection{Regular symmetric Hadamard matrices}

We first consider uniform mixing on strongly regular graphs with parameters given in (i) or (ii) of Theorem~\ref{thm:chan}. If a graph $X$ with adjacency matrix $A$ has these parameters or the complementary parameters, then $J-2A$ or $J-2A-2I$, respectively, is a regular symmetric Hadamard matrix \cite{brouwer84}.

\begin{lemma} 
    Suppose $X$ is a primitive strongly regular graph such that $X$ or $\overline{X}$ 
    has parameters given in (i) or (ii) of Theorem \ref{thm:chan}. Then $X$ admits uniform 
    mixing if and only if one of the following holds:
    \begin{enumerate}
        \item [(a)] 
		The parameters of $X$ or $\overline{X}$ are given in (i) and $\theta$ is even. 
        \item [(b)] 
		The parameters of $X$ or $\overline{X}$ are given in (ii) and $\theta$ is odd.
    \end{enumerate}
\end{lemma}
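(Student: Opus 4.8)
The plan is to use the fact that for the parameter sets (i) and (ii) both restricted eigenvalues are $\pm\theta$, so that $\tau=-\theta$, and then to show that there is essentially only one candidate complex Hadamard matrix. First I would record that in (i) and (ii) one has $\lambda=\mu$ and $k-\mu=\theta^2$, so the nontrivial eigenvalues are the roots of $z^2-\theta^2$; hence $\tau=-\theta$ and $\sqrt n=2\theta$. Substituting $\tau=-\theta$ into the characterizing equations \eqref{eqn:flatsrg2} and \eqref{eqn:flatsrg3} and taking their sum and difference isolates $1-y$ and $\theta(x-y)$, while \eqref{eqn:flatsrg1} records the eigenvalue on the all-ones space.

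Next I would pin down $x$ and $y$. Since $U(t)$ lies in the Bose--Mesner algebra of $X$ and has constant diagonal, after normalizing that diagonal to $1$ by the choice of $c$, the matrix $c\sqrt n\,U(t)$ has the form $I+xA+y\bar A$ with $|x|=|y|=1$; the requirement that it be Hadamard is exactly $WW^{*}=nI$. Expanding this with the strongly regular relations for $A^2$, $\bar A^2$ and $A\bar A$ and using $\lambda=\mu$, the vanishing of the coefficients of $A$ and of $\bar A$ gives two real equations in $\operatorname{Re}(x)$, $\operatorname{Re}(y)$ and $\operatorname{Re}(x\bar y)$. Writing $x=e^{i\alpha}$, $y=e^{i\beta}$ and factoring the sum and the difference of these two equations, a short case analysis shows that for $\theta\ge 2$ (which primitivity forces, since $\theta=1$ gives $n=4$) the only unit-modulus solution is $x=-1$, $y=1$, that is $W=J-2A$. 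Thus uniform mixing on $X$ is equivalent to the single matrix equation $J-2A=c\sqrt n\,U(t)$.

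It then remains to solve this equation in $t$ and $c$. Comparing spectral components, $J-2A$ has eigenvalue $n-2k$ on $E_k$, $-2\theta$ on $E_\theta$, and $+2\theta$ on $E_\tau$; the last two give $e^{2i\theta t}=-1$, so $t=(2m+1)\pi/(2\theta)$ and $c=e^{i\theta t}=\pm i$. Feeding this into the $E_k$-equation leaves the single condition $e^{i(k+\theta)t}=\operatorname{sgn}(n-2k)$, where the row sum $n-2k$ equals $2\theta>0$ in case (i) and $-2\theta<0$ in case (ii). Evaluating $(k+\theta)t$ — which is $\theta(2m+1)\pi$ in case (i), since $k+\theta=2\theta^2$, and $(\theta+1)(2m+1)\pi$ in case (ii), since $k+\theta=2\theta^2+2\theta$ — converts this into a parity condition on $\theta$, yielding the dichotomy recorded in (a) and (b).

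Finally, to handle the hypothesis that $\bar X$ (and not necessarily $X$) carries the parameters (i) or (ii), I would invoke the opening lemma relating a regular graph and its complement at times that are integer multiples of $2\pi/n$: the mixing times $t=(2m+1)\pi/(2\theta)$ satisfy $t/(2\pi/n)=(2m+1)\theta\in\ints$, so mixing on $X$ and on $\bar X$ occur at exactly these same times and the conclusion transfers. The step I expect to be the main obstacle is the uniqueness argument: verifying carefully that no complex $x,y$ on the unit circle besides $(-1,1)$ satisfy the two Hadamard equations, because the entire reduction to a clean parity condition rests on $J-2A$ being the only possible complex Hadamard matrix of the form $I+xA+y\bar A$.
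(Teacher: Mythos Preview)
Your overall strategy parallels the paper's: reduce to the characterizing equations, pin down $(x,y)$, and then solve for $t$. Two methodological differences are worth noting. First, the paper simply cites Chan for the conclusion $(x,y)=(-1,1)$, whereas you propose to derive it directly from $WW^{*}=nI$; this is more self-contained, but as you yourself flag, it is only sketched and is the step most in need of care. Second, for the passage from $X$ to $\bar X$ you invoke the opening lemma at times $t\in(2\pi/n)\ints$, which is cleaner than the paper's direct re-computation of the complementary characterizing equations; your observation that $t/(2\pi/n)=(2m+1)\theta\in\ints$ is exactly what is needed to make that lemma apply.

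There is, however, a genuine gap in your final step. You assert that evaluating $(k+\theta)t$ ``converts this into a parity condition on $\theta$, yielding the dichotomy recorded in (a) and (b)'', but you do not actually carry out the check---and if you do, it does \emph{not} yield~(b). In case~(ii) your own equation reads $e^{i(\theta+1)(2m+1)\pi}=\operatorname{sgn}(n-2k)=-1$, which forces $(\theta+1)(2m+1)$ to be odd and hence $\theta$ to be \emph{even}, not odd. A direct sanity check confirms this: for the $(16,10,6,6)$ graph (case~(ii) with $\theta=2$) one has $\sqrt{n}\,U(\pi/4)=-i(J-2A)$, so uniform mixing occurs with $\theta$ even. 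Thus your argument, as written, does not establish the stated dichotomy: either a sign has slipped somewhere in your analysis, or the statement of~(b) itself needs revisiting---and your hand-wave over the final parity verification is precisely where this discrepancy is hiding.
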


\proof
First suppose that $X$ is a graph with parameters $(4\th^2,2\th^2- \th, \th^2-\th,\th^2-\th)$. Chan shows that $x = -1$ and $y = 1$. The characterizing equations above reduce to 
\begin{align*}
ce^{i\th(2\th-1)t}  & = 1  \\
ce^{i\th t} & = -1 \\
ce^{-i\th t}  & = 1.
\end{align*}
Thus $c=e^{i\th t} = \pm i$, so $\th t = \pi(m+1/2)$ for some integer $m$, and $t = \pi(m+1/2)/\th$ and $kt = (2\th-1)\pi(m+1/2)$. When $\th$ is even, any $m$ is a valid solution and uniform mixing occurs. These are the Latin square graphs $L_\th(2\th)$. When $\th$ is odd, no solutions occur. 

For the complement, the characterizing equations for $\overline{G}$ reduce to 
\begin{align}
ce^{i(2\th^2+\th-1)t} & = 1  \\
ce^{i(\th-1) t} & = 1  \\
ce^{i(-\th-1)t} & = -1
\end{align}
for some $\abs{c} = 1$.
Thus $c = e^{-i(\th-1) t}$ and the first and third equations are $e^{it2\th^2t} = 1$ and $e^{-i2\th t} = -1$. Thus $e^{i\th t} = \pm i$, so $\th t = \pi(m+1/2)$ for some $m$, and $t = \pi(m+1/2)/\th$. When $\th$ is even, any $m$ is a valid solution and uniform mixing occurs. When $\th$ is odd, no solutions occur. 

If $X$ or $\overline{X}$ has parameters $(4\th^2,2\th^2+ \th, \th^2+\th,\th^2+\th)$, then a similar results holds: there are no solutions for $\th$ even, and for $\th$ odd $t = \pi(m+1/2)/\th$ is a solution for every $m$. These are the negative Latin square graphs $NL_\th(2\th)$. The same applies to the complement: uniform mixing occurs when $\th$ is even, and no solutions occur when $\th$ is odd.\qed

There are infinite families of strongly regular graphs with these parameters \cite{haemers08}.

\subsection{Symplectic-type graphs}   
                             
We turn the graphs in part (iii) of Theorem~\ref{thm:chan}.
These are the strongly regular graphs with parameters $(4\th^2-1,2\th^2, \th^2,\th^2)$; their eigenvalues satisfy $\th + \tau = 0$. Chan shows in this case that $(x,y)$ is one of
\[
	\left\{\left(-1, \frac{2\th^2-3 \pm i\sqrt{4\th^2-5}}{2(\th^2-1)}\right),
		\left(\frac{-2\th^2+1 \pm i\sqrt{4\th^2-1}}{2\th^2},1\right)\right\}.
\]

{\bf Case} $(x,y) = (\tfrac{-2\th^2+1 \pm i\sqrt{4\th^2-1}}{2\th^2},1)$: The characterizing equations reduce to 
\begin{align}
ce^{i2\th^2t} \sqrt{n} & = \pm i\sqrt{n} \label{eqn:flatsrg1.1} \\
ce^{i\th t} \sqrt{n} & = \th(x-1) \label{eqn:flatsrg1.2}  \\
ce^{-i\th t} \sqrt{n} & = -\th(x-1) \label{eqn:flatsrg1.3} .
\end{align}
Equations \eqref{eqn:flatsrg1.2} and \eqref{eqn:flatsrg1.3} imply that $e^{2i\th t} = -1$ and so $e^{i\th t} = \pm i$ and $e^{i2\th^2t} = (-1)^\th$. Then Equation \eqref{eqn:flatsrg1.1} implies that $c = \pm i$. But then the LHS of equation \eqref{eqn:flatsrg1.2} is real while the RHS is not, a contradiction. So there are no solutions.

The characterizing equations for the complement of $X$ are:
\begin{align}
ce^{i(2\th^2-2)t} \sqrt{n} & = \pm i\sqrt{n} \label{eqn:flatsrgc1.1} \\
ce^{i(-\th-1)t} \sqrt{n} & = \th(x-1) \label{eqn:flatsrgc1.2}  \\
ce^{i(\th-1)t} \sqrt{n} & = -\th(x-1) \label{eqn:flatsrgc1.3} .
\end{align}
Equations \eqref{eqn:flatsrgc1.2} and \eqref{eqn:flatsrgc1.3} imply that $e^{i2\th t} = -1$ and so $e^{i2\th^2t} = (-1)^\th$. Then equation \eqref{eqn:flatsrgc1.1} implies that $c = \pm i e^{i2t}$. Since $e^{it}$ is a $4\th$-th root of unity, so is $ce^{i(-\th-1)t} = e^{it(1-\th)}$. On the other hand, from equation \eqref{eqn:flatsrgc1.2},
\[
e^{it(1-\th)} = \tfrac{\th}{\sqrt{n}}(x-1) = \tfrac{-\sqrt{n} \pm i}{2\th},
\]
which has algebraic degree exactly four since $n$ is not a perfect square. This implies that $e^{it(1-\th)}$ is a primitive $m$-th root of unity for some $m$ such that $\phi(m) = 4$, namely $m \in \{5,8,10,12\}$. (Here $\phi$ is the Euler totient function.) But then it is not difficult to check that none of these primitive $m$-th roots of unity have imaginary part $\pm1/2\th$, for any $\th > 1$. Thus there are no solutions.

\paragraph{}

{{\bf Case}  $(x,y) = (-1, \tfrac{2\th^2-3 \pm i\sqrt{4\th^2-5}}{2(\th^2-1)})$:} The characterizing equations reduce to 
\begin{align}
ce^{i2\th^2t} \sqrt{n} & = -2 \pm i\sqrt{4\th^2-5}  \label{eqn:flatsrg2.1} \\
ce^{i\th t} \sqrt{n} & = 1 - \th + y(-\th-1) \label{eqn:flatsrg2.2} \\
ce^{-i\th t} \sqrt{n} & = 1 + \th + y(\th-1) \label{eqn:flatsrg2.3}.
\end{align}
Simplifying \eqref{eqn:flatsrg2.2} plus \eqref{eqn:flatsrg2.3} we have $\sqrt{n} c\Re(e^{i\th t}) = 1 - y$.  Taking the absolute value, we get $\Re(e^{i\th t})^2 = 1/n(\th^2-1)$. Similarly, from \eqref{eqn:flatsrg2.2} minus \eqref{eqn:flatsrg2.3} we get $\Im(e^{i\th t})^2 = \th^2(4\th^2 - 5)/n(\th^2-1)$. Thus we can solve:
\begin{align}
e^{i\th t} & = \frac{1}{\sqrt{n(\th^2-1)}}[\pm 1 \pm \th i\sqrt{4\th^2-5}] \label{eqn:eitht}, \\
c & = \frac{1}{2\sqrt{\th^2-1}}[\pm 1 \pm i\sqrt{4\th^2-5}] \label{eqn:c}.
\end{align}
We then check that there are no solutions to the characterizing equations for any choice of integer $\th$. For small $\th$ we can check explicitly. When $\th$ is large, we examine $\arg(ce^{i2\th^2 t})$ in two different ways: once from combining \eqref{eqn:eitht} and \eqref{eqn:c} and once from \eqref{eqn:flatsrg2.1}. We use asymptotics with error bounds to show the values are not equal. We need one preliminary observation: when $\th$ is large, $e^{i \th t}$ is close to $\pm i$ and so the argument of $e^{i\th t}$ is approximately $\pm (\pi/2 - \Re(e^{i\th t}))$. 

\begin{lemma}\label{lem:argbound}
For any $x$ on the unit circle, $\arg(x) = \pm (\pi/2 - \Re(x)) + \eps$, where $\abs{\eps} \leq \Re(x)^2$.
\end{lemma}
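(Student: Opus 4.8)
The plan is to reduce the claim to a single elementary inequality in one real variable. Since conjugating $x$ negates $\arg(x)$ while fixing $\Re(x)$, it suffices to treat $x$ in the closed upper half-plane and take the $+$ sign (the lower half-plane then yields the $-$ sign). So I would write $x = e^{i\theta}$ with $\theta = \arg(x) \in [0,\pi]$, so that $\Re(x) = \cos\theta$, and set $\eps := \theta - (\pi/2 - \cos\theta)$. The goal becomes $\abs{\eps} \le \cos^2\theta$ for all $\theta \in [0,\pi]$.

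Next I would pin down the sign of $\eps$. The function $h(\theta) = \theta - \pi/2 + \cos\theta$ satisfies $h'(\theta) = 1 - \sin\theta \ge 0$ and $h(\pi/2) = 0$, so $h \le 0$ on $[0,\pi/2]$ and $h \ge 0$ on $[\pi/2,\pi]$. Substituting $u = \pi/2 - \theta$ on the first subinterval and $u = \theta - \pi/2$ on the second, and using $\abs{\cos\theta} = \sin u$ in both cases, collapses the desired bound $\abs{h(\theta)} \le \cos^2\theta$ to the single inequality $u - \sin u \le \sin^2 u$ for $u \in [0,\pi/2]$.

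Finally I would establish that inequality through the chain $u - \sin u \le u^3/6 \le (4/\pi^2)u^2 \le \sin^2 u$. The first step is the standard bound $\sin u \ge u - u^3/6$, which follows by integrating $\cos u \ge 1 - u^2/2$ from $0$ to $u$; the last step is Jordan's inequality $\sin u \ge (2/\pi)u$ squared; and the middle step reduces to $u \le 24/\pi^2 \approx 2.43$, which holds since $u \le \pi/2$. The mathematical content is entirely in this elementary inequality, so I expect no serious obstacle. The only fiddly part is the bookkeeping: fixing a branch of $\arg$, matching it to the correct choice of sign $\pm$, and verifying that both halves of $[0,\pi]$ genuinely reduce to the same one-variable inequality.
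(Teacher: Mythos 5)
Your proof is correct, but it takes a genuinely different route from the paper's. The paper argues geometrically: assuming (say) $\Im(x)>0$ and $\Re(x)<0$, the quantity $\arg(x)-\pi/2$ is the length of the circular arc from $i$ to $x$, which is monotone in both coordinates and hence squeezed between its horizontal extent $-\Re(x)$ and the width plus height of its bounding rectangle, $-\Re(x)+\bigl(1-\sqrt{1-\Re(x)^2}\bigr)$; the elementary bound $\sqrt{z}\ge z$ on $[0,1]$ then gives $1-\sqrt{1-\Re(x)^2}\le\Re(x)^2$. Written in your variable $u$ (the angular distance from $\pm i$), both arguments ultimately establish the same inequality $u-\sin u\le\sin^2 u$ on $[0,\pi/2]$: the paper gets it as $u-\sin u\le 1-\cos u\le\sin^2 u$ from the bounding box, while you get it as $u-\sin u\le u^3/6\le(4/\pi^2)u^2\le\sin^2 u$ from the cubic Taylor remainder and Jordan's inequality. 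The paper's route is shorter and delivers the one-sided refinement $0\le\eps\le\Re(x)^2$ in a single stroke (the lower bound being ``arc length is at least horizontal displacement''), whereas you recover the sign of $\eps$ separately from $h'(\theta)=1-\sin\theta\ge0$; in exchange, your argument is purely one-variable calculus and avoids having to justify that a coordinate-monotone arc has length at most the width plus height of its bounding box. Your reduction and sign bookkeeping (conjugation for the lower half-plane, the substitutions $u=\pi/2\mp\theta$ collapsing both halves to one inequality) all check out, so the proposal is complete.
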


\proof
Assume $\Im(x) > 0$ and $\Re(x) < 0$: the other cases are similar. The part of $\arg(x)$ past $\pi/2$ is the portion of the unit circle arc in the upper left quadrant, which is contained in a rectangle with horizontal width $\Re(x)$ and vertical height $1 - \sqrt{1-\Re(x)^2}$. Since $\sqrt{z} \geq z$ for any $z \in [0,1]$, we have
\begin{align*} \arg(x) - \pi/2 & \leq -\Re(x) + 1 - \sqrt{1-\Re(x)^2} \\
& \leq -\Re(x) + 1 - (1-\Re(x)^2) \\
& = -\Re(x) + \Re(x)^2.
\end{align*}
On the the other hand $\arg(x) - \pi/2 \geq -\Re(x)$.\qed

Now consider the arguments of $e^{i\th t}$ and $c$ from equations  \eqref{eqn:eitht} and \eqref{eqn:c}:
\begin{align*}
\th t = \arg(e^{i \th t}) &= \pm \tfrac{\pi}{2} \pm \tfrac{1}{2\th^2} + O(\tfrac{1}{\th^4}), \\ \vspace{4pt}
\arg(c) &= \pm\tfrac{\pi}{2} \pm \tfrac{1}{2\th} + O(\tfrac{1}{\th^2}), \\ \vspace{4pt}
\arg(ce^{i2\th^2 t}) &= \arg(c) + 2\th(\th t) \\ \vspace{4pt}
& = \pm \tfrac{\pi}{2}  \pm \tfrac{1}{2\th}[2 \pm 1] +O(\tfrac{1}{\th^2}). \vspace{4pt}
\end{align*}
On the other hand from  \eqref{eqn:flatsrg2.1} we have $\Re(ce^{i2\th^2 t}) = -2/\sqrt{n} \approx  -\tfrac{1}{\th}$, and so
\[
\arg(ce^{i2\th^2 t}) = \pm \tfrac{\pi}{2}  - \tfrac{1}{\th} +O(\tfrac{1}{\th^2}),
\]
a contradiction. Using Lemma \ref{lem:argbound} this asymptotic argument can be made exact. Details are left to the reader.

The characterizing equations of the complement reduce to 
\begin{align}
ce^{i(2\th^2-2)t} \sqrt{n} & = -2 \pm i\sqrt{4\th^2-5} \label{eqn:flatsrg2.1c} \\
ce^{i(-\th-1) t} \sqrt{n} & = 1 - \th + y(-\th-1) \label{eqn:flatsrg2.2c} \\
ce^{i(\th-1) t} \sqrt{n} & = 1 + \th + y(\th-1) \label{eqn:flatsrg2.3c}.
\end{align}
Adding/subtracting the last two equations gives $\sqrt{n}ce^{-it}\Re(e^{i\th t}) = 1-y$ and $\sqrt{n}ce^{-it}\Im(e^{i\th t}) = \th(1+y)$. Again we can solve:
\begin{align}
e^{i\th t} & = \frac{1}{\sqrt{n(\th^2-1)}}[\pm 1 \pm \th i\sqrt{4\th^2-5}] \\
ce^{-it} & = \frac{1}{2\sqrt{\th^2-1}}[\pm 1 \pm i\sqrt{4\th^2-5}].
\end{align}
Again we use an asymptotic description of the argument which can be made precise with Lemma \ref{lem:argbound}. We get: 
\begin{align*}
\th t = \arg(e^{i \th t}) &= \pm \tfrac{\pi}{2} \pm \tfrac{1}{2\th^2} + O(\tfrac{1}{\th^4}), \\
t &= \pm  \tfrac{\pi}{2\th} \pm \tfrac{1}{2\th^3} + O(\tfrac{1}{\th^5}),\\
\arg(c) & = t  \pm  \tfrac{\pi}{2} \pm \tfrac{1}{2\th} + O(\tfrac{1}{\th^2}), \\ 
\arg(ce^{i(2\th^2-2)t}) &= \arg(c) + (2\th^2-2)t \\
& = \pm  \tfrac{\pi}{2} + \tfrac{1}{\th}[\pm \tfrac{1}{2} \pm 1 \pm \pi/2] + O(\tfrac{1}{\th^2}).
\end{align*}
Meanwhile from equation \eqref{eqn:flatsrg2.1c}, 
\[
\arg(ce^{i(2\th^2-2)t}) = \pm  \tfrac{\pi}{2} - \tfrac{1}{\th} + O(\tfrac{1}{\th^2}).
\] 
Since the two expressions are not equal, no solution exists. Thus there is no uniform mixing in the symplectic-type graphs or their complements.

\subsection{Regular conference matrix type graphs}  

We treat the fourth family of graphs from Chan's theorem.
These have parameters $(n,k,\la,\mu) = (4\th^2+4\th+2,2\th^2+ \th, \th^2-1,\th^2)$ 
and $\th + \tau = -1$.

Chan found $x = \pm i$ or $x = \tfrac{-1 \pm i\sqrt{(4\th^2(\th+1)^2-1}}{2\th(\th+1)}$ and in either case, the $y = \bar{x}$. The characteristic equations reduce to:
\begin{align}
ce^{ikt} \sqrt{n} & = 1 + xk + y(n-k-1) \label{eqn:flatsrg4.1} \\
ce^{i\th t} \sqrt{n} & = 1 + x\th + \bar{x}(-\th-1) \label{eqn:flatsrg4.2} \\
ce^{i(-\th-1) t} \sqrt{n} & = 1 + x(-\th-1) + \bar{x}\th \label{eqn:flatsrg4.3} .
\end{align}
From \eqref{eqn:flatsrg4.2} plus \eqref{eqn:flatsrg4.3} we have 
\[
\sqrt{n}c(e^{i\th t} + e^{i(-\th-1)t}) = 2 - 2\Re(x),
\]
so $ce^{i\th t} + ce^{i(-\th-1)t}$ is real. Thus either $ce^{i\th t} = -ce^{i (-\th-1)t}$ or $ce^{i\th t} = \overline{ce^{i (-\th-1)t}}$. But in the first case $x = y = 1$, which is not a solution. We conclude that  or $ce^{i\th t} = \overline{ce^{i (-\th-1)t}}= \bar{c}e^{i(\th+1)t}$ and therefore $c = e^{it/2}$. The same analysis shows that $c = e^{it/2}$ for the complement of $G$ as well.

\paragraph{}

{{\bf Case} $x = i$, $y = -i$:} The characteristic equations reduce to:
\begin{align} 
\sqrt{n}e^{it(k+1/2)} &= 1 -i(2\th+1) \\
\sqrt{n}e^{it(\th+1/2)} &= 1 +i(2\th+1) \label{eqn:algdeg4} 
\end{align}
It follows that $e^{it(k+\th+1)} = 1$, so $e^{it}$ is a $(k+\th+1) = (2\th^2+2\th+1)$-th root of unity and $e^{it(\th+1/2)}$ is a $(4\th^2+4\th+2)$-th root of unity. But by \eqref{eqn:algdeg4}, noting that $\sqrt{n}$ is not an integer, $e^{it(\th+1/2)}$ has algebraic degree exactly $4$. Since none of the primitive $m$-th roots of unity of algebraic degree $4$ have real part $1/\sqrt{n}$ for any $\th$, there are no solutions.

For the complement of $X$, the characteristic equations reduce to
\begin{align} 
\sqrt{n}e^{it((n-k-1)+1/2)} &= 1 -i(2\th+1) \\
\sqrt{n}e^{it(\th+1/2)} &= 1 -i(2\th+1) \label{eqn:algdeg4.2} 
\end{align}
So $e^{it(n-k-1-\th)} = 1$ and $e^{it}$ is a $n-k-1-\th = (2\th^2+2\th+1)$-th root of unity. Again there are no solutions.

\paragraph{}

{{\bf Case} $x = \tfrac{-1 \pm i\sqrt{(4\th^2(\th+1)^2-1}}{2\th(\th+1)}$:} This case is similar. The characteristic equations reduce to:
\begin{align*} 
\sqrt{n}e^{it(k+1/2)} &= \frac{-2\th^2-2\th-1 \pm i(2\th+1)\sqrt{4\th^2(\th+1)^2-1}}{2\th(\th+1)} \\
\sqrt{n}e^{it(\th +1/2)} &= \frac{2\th^2+2\th+1 \mp i(2\th+1)\sqrt{4\th^2(\th+1)^2-1}}{2\th(\th+1)} 
\end{align*}
Thus $-e^{it(\th+1/2)} = e^{it(k+1/2)}$ and so $e^{it(k-\th)} = -1$ and $e^{it}$ is a $2(k-\th) = 4\th^2$-th root of unity while $e^{it(k+1/2)}$ is an $8\th^2$-th root of unity. Since none of the primitive $m$-th roots of unity of algebraic degree $4$ have real part $\tfrac{2\th^2+2\th+1}{\sqrt{n}(2\th(\th+1)}$ for any $\th$, there are no solutions.

For the complement of $X$, the characteristic equations are:
\begin{align*} 
\sqrt{n}e^{it((n-k-1)+1/2)} &= \frac{-2\th^2-2\th-1 \pm i(2\th+1)\sqrt{4\th^2(\th+1)^2-1}}{2\th(\th+1)} \\
\sqrt{n}e^{it(\th +1/2)} &= \frac{2\th^2+2\th+1 \pm i(2\th+1)\sqrt{4\th^2(\th+1)^2-1}}{2\th(\th+1)} 
\end{align*}
Then $e^{it(n-k+\th)} = 1$, so $e^{it}$ is a $(n-k+\th) = (2\th^2+4\th+2)$-th root of unity. Again no solutions exist.

\subsection{Conference graphs}
                              
The final family of graphs that arise in Chan's theorem are known as conference graphs.
A conference graph has parameters $((2\th+1)^2,2\th^2+2\th, \th^2+\th-1,\th^2+\th)$ with $\th
+ \tau = -1$. Chan shows that $y = \bar{x}$. As in the previous case, the characteristic
equations reduce to \eqref{eqn:flatsrg4.1}, \eqref{eqn:flatsrg4.2}, and
\eqref{eqn:flatsrg4.3} with $c = e^{it/2}$. Then equations \eqref{eqn:flatsrg4.2} minus
\eqref{eqn:flatsrg4.3} reduce to $\Im(x) = \Im(e^{it(\th+1/2)})$ and therefore $x =
e^{it(\th+1/2)}$ or $x = -e^{-it(\th+1/2)}$. There are two possible values of $(x,y)$, namely
\[
	x \in \left\{\frac{-1 \pm \sqrt{(2\th+1)(2\th-1)}i}{2\th},  
		\frac{1 \pm \sqrt{(2\th+1)(2\th+3)}i}{2(\th+1)}\right\}, \; y = \bar{x}.
\]

{{\bf Case} $x = \tfrac{-1 \pm \sqrt{(2\th+1)(2\th-1)}i}{2\th}$:} In this case
\eqref{eqn:flatsrg4.1} reduces to $ce^{ikt} = -1$. Thus $t = \frac{\pi(2m+1)}{k+1/2} =
\tfrac{2\pi(2m+1)}{n}$, for some integer $m$.

Suppose $\th$ is an integer. Now $x = e^{it(\th+1/2)} = e^{i\pi(2m+1)/(2\th+1)}$ or
$-e^{-it(\th+1/2)} = -e^{-i\pi(2m+1)/(2\th+1)}$, so $x$ is a $2(2\th+1)$-th root of unity.
But we claim that $x$ is a $2(2\th+1)$-th root of unity only when $\th = 1$. To see this,
note that $\abs{\Re(x)} = 1/2\th$, while any $2(2\th+1)$-th root of unity has a real part of
larger absolute value. (The root smallest real part in absolute value is $\om =
e^{i\pi(\th+1)/(2\th+1)}$, which has $\abs{\Re(\om)} = \sin(\tfrac{\pi}{2(2\th+1)}) > 1/2\th$
for $\th > 1$.) In the case $\th = 1$, we get the $3 \times 3$ Latin square graph, for which
uniform mixing does occur.

If $\th$ is not an integer, then by Corollary \ref{no_algebraic_reg} uniform mixing cannot occur on the corresponding conference graph. So there are no solutions if $\th$ is not an integer.

{{\bf Case} $x = \tfrac{1 \pm \sqrt{(2\th+1)(2\th+3)}i}{2(\th+1)}$:} This case is similar: we
find $ce^{ikt} = 1$. Thus $t = \frac{\pi(2m+1)}{k+1/2} = \tfrac{2\pi(2m+1)}{n}$, for some
integer $m$. So $t(\th+1/2) = (2\pi m)/(2\th+1)$, and $x = e^{it(\th+1/2)}$ or
$-e^{-it(\th+1/2)}$ is a $(2\th+1)$-th root of unity. Since $\abs{\Re(x)}$ is too small, this
never occurs. Thus the only conference graph having uniform mixing is the $3 \times 3$ Latin
square.

The combination of the work above proves Theorem \ref{srg_theorem}.

\section{Bipartite graphs}
\label{bipartite_graphs} 
Next we consider bipartite graphs. To begin, suppose that $X$ is a bipartite graph on $n$ vertices with adjacency matrix $A$. We assume the rows and columns of $A$ are ordered such that 
\[
	A =\begin{pmatrix} 0&B\\ B^T&0 \end{pmatrix}.
\]
Since $A$ is a block matrix, we easily compute the even and odd powers of $A$.
\begin{align*}
	A^{2k} &= \pmat{(BB^T)^k&0\\ 0&(B^TB)^k} \\
     A^{2k+1} &= \pmat{0&(BB^T)^kB\\ (B^TB)^kB^T&0}.
\end{align*}
Now we derive an expression for the transition matrix of $X$ in terms of these blocks.
\begin{align*}
	U(t) &= e^{itA} = \sum_{k \geq 0} \frac{1}{k!}(itA)^k \\
	       &= \sum_{k \geq 0}\frac{(-1)^kt^{2k}}{(2k)!} \pmat{(BB^T)^{k}&0\\ 0&(B^TB)^{k}} \\
	       &+ i \sum_{k \geq 0}\frac{(-1)^{k}t^{2k+1}}{(2k+1)!} \pmat{0&(BB^T)^{k}B\\ (B^TB)^{k}B^T&0} \\
\end{align*}

To simplify notation we write $U(t)$ in block form as
\[
	U(t) = \pmat{F_1(t)& iK(t)\\ iK^T(t)&F_2(t)}.
\]
We pay particular attention to the fact that $F_1(t)$, $F_2(t)$, and $K(t)$ are all real matrices for all times $t$. This observation leads to the following result.

\begin{lemma}
\label{bipartite_fourth_root}
Suppose $X$ is a bipartite graph on $n > 2$ vertices with transition matrix $U(t)$. If $X$ admits uniform mixing at time $t$, then each entry of $\sqrt{n}U(t)$ is a fourth root of unity.
\end{lemma}
\begin{proof}
If $X$ admits uniform mixing at time $t$, then 
\[
\left| U(t)_{j,k} \right| = \frac{1}{\sqrt{n}}
\]
for all $1 \leq j,k \leq n$. In terms of the blocks that comprise $U(t)$, this means that all of the entries of the real matrices $F_1(t)$, $F_2(t)$ and $K(t)$ are equal to $\pm 1 / \sqrt{n}$. Hence all entries of $\sqrt{n}U(t)$ are fourth roots of unity.\qed
\end{proof}

Lemma \ref{bipartite_fourth_root} can be used to rederive Kay's result \cite{Kay11} concerning the phase factors for perfect state transfer in bipartite graphs. In addition, combining Lemma \ref{bipartite_fourth_root} with Theorem \ref{no_algebraic} yields the following results.

\begin{lemma}
If $X$ is a bipartite graph on $n > 2$ vertices that admits uniform mixing, then the ratio of any two non-zero eigenvalues must be rational.
\end{lemma}
\begin{proof}
Let $U(t)$ denote the transition matrix of $X$, and suppose that $X$ admits uniform mixing at non-zero time $t$. By Lemma \ref{bipartite_fourth_root}, we know that each entry of $U(t)$ must be a fourth root of unity. Therefore, all the entries of $U(t)$ are algebraic numbers. By Theorem \ref{no_algebraic}, the ration of any two non-zero eigenvalues of $X$ must be rational.\qed
\end{proof}

\begin{theorem}
\label{bipartite_divisibility_theorem}
 If $X$ is a bipartite graph on $n > 2$ vertices that admits uniform mixing, then $n$ is divisible by four.
\end{theorem}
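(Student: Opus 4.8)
The plan is to exploit the block structure of $U(t)$ recorded above together with the fact that uniform mixing forces $H:=\sqrt{n}\,U(t)$ to be a complex Hadamard matrix. Since the diagonal blocks $C_1(t),C_2(t)$ are real symmetric and the off-diagonal blocks equal $i$ times a real matrix, the requirement that every entry have modulus $1/\sqrt n$ forces each entry of the diagonal blocks of $H$ to lie in $\{\pm1\}$ and each entry of the off-diagonal blocks to lie in $\{\pm i\}$. Writing the bipartition sizes as $a$ and $b$ with $a+b=n$, I would record $H$ in the form
\[
H=\pmat{P & iR\\ iR^{T} & Q},
\]
where $P$ (order $a$) and $Q$ (order $b$) are symmetric $\pm1$ matrices and $R$ is an $a\times b$ matrix with $\pm1$ entries. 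Since $HH^{*}=nI$ and $H$ is square, $H^{*}H=nI$ as well, so the columns of $H$ are pairwise orthogonal.

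The key step is to reduce the complex orthogonality of two columns lying in the same part to honest real orthogonality of $\pm1$ vectors. For a column $h_j$ with $j\le a$, the entries are real ($\pm1$) in the first $a$ rows and purely imaginary ($\pm i$) in the last $b$ rows; let $w_j\in\{\pm1\}^{n}$ be the real vector obtained by deleting the factor $i$ from the imaginary entries. Because $(ix)\overline{(iy)}=i\overline{i}\,xy=xy$ for real $x,y$, a direct computation gives
\[
\sum_{\ell}(h_j)_\ell\,\overline{(h_k)_\ell}=\ip{w_j}{w_k},
\]
a real number. Hence orthogonality of two columns of $H$ from the same part is exactly orthogonality of $w_j$ and $w_k$ in $\re^{n}$, and a part with $m$ vertices produces $m$ pairwise-orthogonal vectors in $\{\pm1\}^{n}$ (the second part is handled identically using $Q$ and the columns of $R$).

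With three pairwise-orthogonal vectors $w_1,w_2,w_3\in\{\pm1\}^{n}$ in hand, I would run the classical Hadamard divisibility argument. On the one hand, since each $w_i$ has squared length $n$,
\[
\ip{w_1+w_2}{w_1+w_3}=\ip{w_1}{w_1}+\ip{w_1}{w_3}+\ip{w_2}{w_1}+\ip{w_2}{w_3}=n;
\]
on the other hand each coordinate of $w_1+w_2$ and of $w_1+w_3$ lies in $\{-2,0,2\}$, so every term of the sum lies in $4\ints$, forcing $n\in 4\ints$. To produce three such vectors I need a part of size at least $3$, which is guaranteed whenever $n\ge 5$, since $a,b\le 2$ would force $n\le 4$.

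The remaining work is the bookkeeping for small $n$, which is where the hypothesis $n>2$ enters, and this is the only delicate point. When $n=4$ the conclusion $4\mid n$ is immediate. When $n=3$ the larger part has exactly two vertices, yielding two orthogonal vectors in $\{\pm1\}^{3}$; but an inner product of two $\pm1$ vectors of odd length cannot vanish, so no such $H$ exists and the implication holds vacuously. Thus whenever a bipartite $X$ on $n>2$ vertices admits uniform mixing we obtain $4\mid n$. I expect the substantive content to be the reduction from complex column-orthogonality to real orthogonality of $\pm1$ vectors and the organization of the part-size cases; the divisibility computation itself is routine.
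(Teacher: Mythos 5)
Your proof is correct and follows essentially the same route as the paper: both exploit the bipartite block structure of $U(t)$ to force the entries of $\sqrt{n}\,U(t)$ into $\{\pm 1, \pm i\}$ and then reduce to the classical divisibility property of real Hadamard matrices. The only difference is cosmetic --- the paper conjugates by the diagonal matrix $D$ with entries $1$ and $i$ on the two colour classes to produce an actual real Hadamard matrix and cites the order-divisibility theorem, whereas you extract three pairwise orthogonal $\pm 1$ columns from the larger colour class and inline the standard $4\mid n$ computation (handling $n=3$ by parity).
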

\begin{proof}
Let $U(t)$ denote the transition matrix of $X$, and suppose that $X$ admits uniform mixing at time $t$. By Lemma \ref{bipartite_fourth_root}, we know that each entry of $U(t)$ must be a fourth root of unity.

Let $\Gamma_1$ and $\Gamma_2$ denote the two colours classes on $X$. For convenience, we assume that the vertices in $\Gamma_1$ correspond to the first $|\Gamma_1|$ rows of $U(t)$. Let $D$ denote the diagonal matrix of order $n\times n$ such that 
\[
D_{u,u} = \begin{cases} 1 \; \; \text{if}  \; \; u \in \Gamma_1 \\ i \; \; \text{if} \; \; u \in \Gamma_2. \end{cases} 
\]
Let $H$ denote the matrix given by
\[
	H = \sqrt{n} DU(t)D.
\]
In terms of the blocks of $U(t)$, we see that
\[
	H = \sqrt{n}\pmat{F_1& -K\\ -K^T&-C_2}.
\]
This implies that $H$ is a real matrix with entries equal to $1$ or $-1$. A straightforward computation also reveals that
\[
H H^{*} = nDU(t)D D^{*} U(t)^{*} D^{*} = nI.
\]
Therefore $H$ is a real Hadamard matrix. It is well known that if $H$ is a real Hadamard matrix of order $n$ such that $n > 2$, then $n$ is divisible by four.\qed
\end{proof}

The work done by Adamczak et al.~\cite{adamczak07} implies that if uniform mixing occurs on $C_{2m}$, then $m$ must be a sum of two squares. Here we show that this is true for all regular bipartite graphs.

\begin{theorem}
If $X$ is a regular, bipartite graph with $n$ vertices that admits uniform mixing, then $n$ is the sum of two integer squares.
\end{theorem}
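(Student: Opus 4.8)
The plan is to exploit two facts in combination: bipartiteness forces the entries of the scaled transition matrix to be Gaussian integers, while regularity forces its row sums all to be equal. Write $\tau$ for a time at which $X$ admits uniform mixing, and set $M = \sqrt{n}\,U(\tau)$. Exactly as in the proof of the preceding theorem, the bipartite structure of $A$ makes $C_1(\tau)$ and $C_2(\tau)$ real and $K(\tau)$ real, so that flatness of $U(\tau)$ forces every entry of $M$ to lie in $\{1,-1,i,-i\}$. In particular $M$ is a matrix over the Gaussian integers $\ints[i]$; this is the only consequence of bipartiteness I will use.

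Next I would invoke regularity. If $k$ denotes the common valency of $X$, then the all-ones vector satisfies $A\one = k\one$, so $\one$ is an eigenvector of $U(\tau) = e^{i\tau A}$ with eigenvalue $e^{ik\tau}$. Hence
\[
    M\one = \sqrt{n}\,U(\tau)\one = \sqrt{n}\,e^{ik\tau}\,\one,
\]
which says that every row of $M$ sums to the single number $\sqrt{n}\,e^{ik\tau}$.

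To finish, observe that each row sum of $M$ is a sum of entries of $M$, hence a sum of elements of $\{1,-1,i,-i\}$, and is therefore a Gaussian integer. Writing $\sqrt{n}\,e^{ik\tau} = a + bi$ with $a,b \in \ints$ and taking the squared modulus gives
\[
    a^2 + b^2 = \abs{\sqrt{n}\,e^{ik\tau}}^2 = n,
\]
so $n$ is a sum of two integer squares.

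The only real content is assembling the two ingredients, and I expect the main (indeed essentially the only) obstacle to be the justification that the entries of $M$ are fourth roots of unity. This is precisely the computation already carried out for the preceding theorem, and it genuinely requires bipartiteness: without it the entries of $\sqrt{n}\,U(\tau)$ are merely unit-modulus algebraic numbers rather than Gaussian integers, and the row-sum argument collapses. Once regularity is used to make all the row sums equal to the common value $\sqrt{n}\,e^{ik\tau}$, the number-theoretic conclusion is immediate.
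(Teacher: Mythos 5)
Your proof is correct and is essentially the paper's own argument: both use the preceding theorem's bipartite decomposition to get that the entries of $\sqrt{n}\,U(\tau)$ are fourth roots of unity, then use regularity to identify the common row sum as a Gaussian integer $a+ib$ of squared modulus $n$. Your step $\abs{\sqrt{n}\,e^{ik\tau}}^2 = n$ is just a more explicit rendering of the paper's computation $n\one = \sqrt{n}\,\comp{U(\tau)}\,\sqrt{n}\,U(\tau)\one = (a-ib)(a+ib)\one$.
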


\begin{proof}
Suppose $X$ is a regular bipartite graph with transition matrix $U(t)$. Further suppose that $X$ admits uniform mixing at time $t$. By Lemma \ref{bipartite_fourth_root}, each of the entries of $\sqrt{n}U(t)$ is a fourth root of unity. In particular, this implies that
\[
	\sqrt{n} U(t)\one =(a+ib)\one \quad \text{and} \quad \sqrt{n} U(t)^{*}\one =(a-ib)\one 
\]
for some integers $a$ and $b$ where $\one$ is the all-ones vector.  Taking the product of both of these expressions yields
\[
	nU(t)U(t)^{*}\one = (a-ib)(a+ib)\one = (a^2 + b^2) \one.
\]
Since $U(t)$ is unitary, we know that $U(t)U(t)^{*} = I$. We conclude that $n = a^2 + b^2$.\qed
\end{proof}

\section{Cycles}
It was conjectured by Ahmadi et al.~\cite{ahmadi03} that no cycle $C_n$, except for $C_3$ and $C_4$, admits uniform mixing. Adamczak et al.~\cite{adamczak07} show that $C_n$ does not admit uniform mixing if $n = 2^u$ for $u \geq 3$ or if $n = 2^{u}m$ where $m$ is not the sum of two integer squares and $u \geq 1$. Carlson et al.~\cite{carlson07} show that uniform mixing does not occur on $C_5$. (This result also follows from the earlier result in this paper that uniform mixing does not occur on conference graphs.)

Using Corollary~\ref{no_algebraic_reg}, we show that uniform mixing cannot occur on any even cycle. Likewise, we see that uniform mixing cannot occur on $C_p$ where $p$ is a prime such that $p \geq 5$. For both of these results, we use the irrationality of the eigenvalues of the underlying graph.

First we recall that the eigenvalues of a general cycle $C_n$ have the form
\[
\theta_r = \omega^r + \omega^{-r},
\]
for $0 \leq r \leq n-1$ where $\omega = e^{2 \pi i /n}$. If $n$ is even, then $2$ and $-2$ are both simple eigenvalues of $C_n$, and the remaining eigenvalues each have multiplicity 2. On the other hand, if $n$ is odd, then $\theta_0 = 2$ is the only simple eigenvalue, and the nontrivial eigenvalues each have multiplicity two.

\begin{lemma}
\label{irrational_lemma}
    If $n =5$ or $n \geq 7$, then $C_n$ has an irrational eigenvalue. 
\end{lemma}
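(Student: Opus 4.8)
The plan is to exhibit a single irrational eigenvalue rather than to analyze the whole spectrum. The natural candidate is $\theta_1 = \zeta + \zeta^{-1} = 2\cos(2\pi/n)$. Since $\theta_1$ is an eigenvalue of the integer adjacency matrix of $C_n$, it is a root of the monic characteristic polynomial and hence an algebraic integer; as noted in the proof of Corollary~\ref{no_algebraic_reg}, the only rational algebraic integers are the ordinary integers. Thus it suffices to show that $\theta_1$ is not an integer, which I would do by trapping it strictly between two consecutive integers.

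For the bounding step I would use that $\cos$ is strictly decreasing on $[0,\pi]$ together with the reference values $\cos(\pi/3) = 1/2$ and $\cos(\pi/2) = 0$. When $n \ge 7$ we have $0 < 2\pi/n < \pi/3$, so $1/2 < \cos(2\pi/n) < 1$ and therefore $\theta_1 \in (1,2)$. When $n = 5$ we have $\pi/3 < 2\pi/5 < \pi/2$, so $0 < \cos(2\pi/5) < 1/2$ and therefore $\theta_1 \in (0,1)$. In both cases $\theta_1$ lies strictly inside an open unit interval with integer endpoints, so it is not an integer, and by the algebraic-integer remark above it is therefore irrational.

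The argument is short and there is no genuine obstacle; the only points requiring care are that the interval inclusions be kept strict (so that the excluded endpoints, which are exactly the integer values, are genuinely avoided) and that the boundary case $n = 6$ — where $2\pi/n = \pi/3$ and $\theta_1 = 1$ really is an integer — be correctly separated. This is precisely why the hypothesis omits $n = 6$ and why $n = 5$ must be treated on its own rather than folded into the $n \ge 7$ range. An alternative would be to invoke Niven's theorem, which says $2\cos(2\pi r/n)$ is rational only when it equals $0$, $\pm 1$, or $\pm 2$, and then classify all $n$ whose spectrum avoids irrational values; but that is heavier machinery than needed here, since trapping the single eigenvalue $\theta_1$ already settles every admissible $n$ at once.
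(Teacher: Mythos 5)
Your proof is correct, but it takes a genuinely different route from the paper. The paper simply cites the classical result (Niven's theorem, via Olmsted) that $2\cos(2\pi/n)$ is rational if and only if $n \in \{1,2,3,4,6\}$, and reads off the conclusion. You instead give a self-contained elementary argument: $\theta_1$ is an algebraic integer because it is a root of the monic integer characteristic polynomial of the adjacency matrix, so rationality would force it to be an ordinary integer, and you rule that out by trapping $2\cos(2\pi/n)$ strictly inside $(1,2)$ for $n \geq 7$ and strictly inside $(0,1)$ for $n = 5$, using the monotonicity of cosine and the reference values $\cos(\pi/3)=1/2$, $\cos(\pi/2)=0$. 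The interval bounds check out ($2\pi/7 < \pi/3 < 2\pi/5 < \pi/2$), and your explicit handling of the boundary case $n=6$ (where $\theta_1 = 1$) correctly explains the shape of the hypothesis. What your approach buys is independence from an external citation and transparency about exactly which eigenvalue is irrational and why; what the paper's citation buys is an immediate if-and-only-if characterization that also disposes of every excluded $n$ in one stroke. Either proof is acceptable; yours is arguably preferable in a self-contained exposition.
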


\proof
Using the notation above, we note that $\theta_1 = 2 \cos(2 \pi i / n)$. Since $n$ is a positive integer, the eigenvalue $\theta_1 = 2 \cos(2 \pi i / n)$ is rational if and only if $n \in \{1,3,4,6\}$ (see \cite{olmsted45}.) If we assume $n = 5$ and $n \geq 7$, then $\theta_1$ is an irrational eigenvalue of $C_n$.\qed

Applying Corollary \ref{no_algebraic_reg}, we obtain the following result.
 
\begin{theorem}
    The cycle $C_4$ is the unique even cycle that admits uniform mixing. 
\end{theorem}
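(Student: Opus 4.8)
The plan is to separate the one affirmative case from the infinitely many negative ones. First I would record that $C_4$ really does admit uniform mixing: since $C_4 = K_2 \cprod K_2$ is the Cartesian product of $K_2$ with itself, and $K_2$ admits uniform mixing, the product does too by the results quoted in the introduction. So the entire content of the theorem is the negative claim that no even cycle on $n \geq 6$ vertices admits uniform mixing.

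Accordingly, suppose $n \geq 6$ is even and that $C_n$ admits uniform mixing at some time $\tau$. The key point is that $C_n$ is simultaneously bipartite (because $n$ is even) and $2$-regular, so both strands of machinery developed above apply. I would first extract that $U(\tau)$ has algebraic entries: for a bipartite graph the block decomposition of $U(t)$ forces a flat $U(\tau)$ to have entries that are real or purely imaginary, and in fact all entries of $\sqrt{n}\,U(\tau)$ are fourth roots of unity, exactly as established in the proof of the divisibility theorem earlier in this section. Hence every entry of $U(\tau)$ is algebraic.

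Now I would invoke the regular case of the algebraicity dichotomy. Because $C_n$ is regular and all entries of $U(\tau)$ are algebraic, Corollary~\ref{no_algebraic_reg} forces every eigenvalue of $C_n$ to be an integer. This integrality requirement is the crux, and I would play it off against the known irrationality of cycle spectra. For even $n \geq 8$ (so in particular $n \geq 7$), Lemma~\ref{irrational_lemma} supplies the irrational eigenvalue $\theta_1 = 2\cos(2\pi/n)$, contradicting integrality; hence no such $C_n$ can admit uniform mixing. The only even value $n \geq 6$ not covered by the lemma is $n = 6$, whose spectrum $\{2,1,1,-1,-1,-2\}$ is integral and therefore slips past the irrationality argument. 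For this residual case I would instead appeal to the divisibility theorem above: since $6$ is not divisible by $4$, the cycle $C_6$ cannot admit uniform mixing. Together with $C_4$, these observations give the stated classification.

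The main obstacle is really just the bookkeeping around the exceptional value $n = 6$. The irrationality lemma has precisely the gap $n \in \{1,3,4,6\}$, and among even cycles on at least six vertices this leaves $C_6$ uncovered, which is exactly why a second, independent obstruction (divisibility by four) must be brought in to finish. Everything else in the argument is an assembly of results already in hand: the bipartite block structure to get algebraicity, the regular corollary to force integral eigenvalues, and the cosine irrationality to rule out integrality.
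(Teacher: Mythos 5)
Your proposal is correct and follows essentially the same route as the paper: bipartite structure forces the entries of $\sqrt{n}\,U(\tau)$ to be fourth roots of unity, hence algebraic, so Corollary~\ref{no_algebraic_reg} forces integral eigenvalues, contradicting Lemma~\ref{irrational_lemma}; the divisibility-by-four theorem is used (as in the paper, which rules out all odd $m$ at once rather than just $C_6$) to cover the one even cycle the irrationality lemma misses.
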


\proof
Let $U(t)$ denote the transition matrix of $C_{2m}$. If $m = 2$, it is straightforward to observe that $U(t)$ is flat at time $t = \pi/4$. Suppose for a contradiction that $m > 2$ and $U(t)$ is flat at some time $t$. By Theorem \ref{bipartite_divisibility_theorem}, it follows that $m$ must be even, and so $m > 3$. Furthermore, by Lemma \ref{bipartite_fourth_root} all of the entries of $U(t)$ are $\pm 1/\sqrt{2m}$ or $\pm i/\sqrt{2m}$, and hence they are all algebraic. Thus by Corollary \ref{no_algebraic_reg}, the eigenvalues of $C_{2m}$ must be integers, which contradicts Lemma \ref{irrational_lemma}.\qed

Next we turn our attention to cycles of odd prime order. It is known that $C_3$ admits uniform mixing at time $t = 2 \pi/9$. On the other hand, it is known that $C_5$ does not admit uniform mixing \cite{carlson07}. (This is also a consequence of our earlier results about conference graphs.) We extend these results to show that $C_p$ does not admit uniform mixing for any prime $p$ such that $p \geq 5$. To obtain this result, we consider the following important observation due to Haagerup \cite{haagerup08}.

\begin{theorem}
    Let $p$ denote a prime number. There are a finite number of cyclic type-II matrices of order $p$ with constant diagonal $1$.\qed
\end{theorem}

Haagerup's work refers to so-called cyclic $p$-roots, which are equivalent to cyclic $p \times p$ type-II matrices with constant diagonal $1$. Using Haagerup's Theorem, we obtain the following new result.

\begin{theorem}
    The cycle $C_3$ is the unique cycle of odd prime order that admits uniform mixing.
\end{theorem}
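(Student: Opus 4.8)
The plan is to argue by contradiction and funnel the hypothesis through the cyclic $p$-root machinery until Corollary~\ref{no_algebraic_reg} applies. Suppose some prime $p \geq 5$ is such that $C_p$ admits uniform mixing, say at time $t$. Since $C_p$ is the Cayley graph of $\ints_p$ with connection set $\{1,-1\}$, its adjacency matrix $A$ is circulant; as $U(t)=e^{itA}$ is a limit of polynomials in $A$, the transition matrix $U(t)$ is circulant as well, and in particular has constant diagonal. Write $d = (U(t))_{1,1}$ for this common diagonal entry. Uniform mixing means $U(t)\circ U(-t) = \tfrac1p J$, so every entry of $U(t)$ has modulus $1/\sqrt{p}$, and in particular $\abs{d} = 1/\sqrt{p}$.

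Next I would normalise so as to land exactly in the setting of a cyclic $p$-root. Set $H = d^{-1}U(t)$. Then $H$ is circulant with constant diagonal $1$, and each of its entries has modulus $\abs{d}^{-1}\cdot p^{-1/2} = 1$. Moreover $H = (d^{-1}p^{-1/2})(\sqrt{p}\,U(t))$ is a unimodular scalar multiple of the complex Hadamard matrix $\sqrt{p}\,U(t)$, hence is itself a complex Hadamard matrix, so $HH^{*}=pI$. Because the entries of $H$ are unimodular we have $H^{(-)} = H^{*}$, and therefore $H$ is a circulant matrix with diagonal $1$ satisfying the defining relation~\eqref{typeII} with $n=p$; that is, $H$ corresponds to a cyclic $p$-root.

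Now I would invoke Haagerup's theorem: since $p$ is prime there are only finitely many cyclic $p$-roots, and the preceding remark that such a finite (zero-dimensional) variety cut out by polynomial equations with rational coefficients has only points with algebraic coordinates shows that every entry of $H$ is algebraic. Since $U(t) = dH$ is a complex scalar multiple of a matrix with algebraic entries, the scalar-multiple lemma immediately preceding Section~\ref{srg_section} shows that every entry of $U(t)$ is algebraic. As $C_p$ is regular, Corollary~\ref{no_algebraic_reg} then forces all eigenvalues of $C_p$ to be integers. This contradicts Lemma~\ref{irrational_lemma}, which guarantees an irrational eigenvalue of $C_p$ for every $p \geq 5$. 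Hence no prime $p \geq 5$ admits uniform mixing, and since $C_3$ is known to admit uniform mixing, $C_3$ is the unique cycle of odd prime order with this property.

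The routine verifications here (the circulant structure of $U(t)$, the modulus bookkeeping giving unimodular entries of $H$, and the identity $H^{(-)} = H^{*}$ for unimodular entries) are all straightforward. The genuine content — and the one place where primality is essential — is the passage from Haagerup's finiteness theorem to algebraicity of the coordinates: it is precisely the finiteness of the $\rats$-variety of cyclic $p$-roots that excludes transcendental entries and thereby unlocks Corollary~\ref{no_algebraic_reg}. I would therefore expect this implication, rather than any of the surrounding algebra, to be the step a careful reader most wants spelled out.
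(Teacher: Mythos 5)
Your proof is correct and follows essentially the same route as the paper: Haagerup's finiteness theorem gives algebraicity of the cyclic $p$-root coordinates, the scalar-multiple lemma transfers algebraicity to the entries of $U(t)$, and Corollary~\ref{no_algebraic_reg} together with Lemma~\ref{irrational_lemma} produces the contradiction. You spell out the normalization of $U(t)$ to a circulant with unit diagonal (the paper only asserts that every such Hadamard is a scalar multiple of a cyclic-$p$-root matrix), while the paper in turn justifies the finiteness-implies-algebraicity step with a Gr\"obner basis argument that you only gesture at; neither difference changes the substance.
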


\proof
Let $p$ denote an odd prime such that $p \geq 5$, and let $U(t)$ denote the transition matrix of $C_p$. Suppose that $C_p$ admits uniform mixing at time $t$. This implies that $\sqrt{p}U(t)$ is a scalar multiple of a type-II matrix of order $p$ with constant diagonal $1$. By Haagerup's work we know that there are a finite such number of such type-II matrices. The set of all cyclic type-II matrices with constant diagonal $1$ of a given order form an algebraic variety. It is well known that if an algebraic variety over $\cx$ has a finite number of points, then the coordinates of each point are algebraic numbers. Therefore $U(t)$ must be a scalar multiple of a matrix with all algebraic entries. In turn, Theorem \ref{no_algebraic_reg} implies that $C_p$ must have all integral eigenvalues. This contradicts the fact that $C_p$ has an irrational eigenvalue, which we proved in Lemma \ref{irrational_lemma}.\qed

\section{Cyclotomic number theory}
We recall some information about number fields related to the cyclic schemes. The following results are well-known. See \cite{samuel70}, for example.

Suppose that $p$ is an odd prime. Let $d = (p-1)/2$, and let $\theta_1, \dots, \theta_d$ denote the nontrivial eigenvalues of $C_p$. Also let $\phi$ denote the Euler phi function. As before, $\omega$ is the primitive $p$-root of unity given by
\[
\omega = e^{2 \pi i/p}.
\]

\begin{lemma}
\label{theta_polynomial_lemma}
Each of the eigenvalues $\theta_0, \dots, \theta_d$ can be expressed as a polynomial in $\theta_1$ with integral coefficients. 
\end{lemma}
\begin{proof}
First recall that $\theta_0 = 2$ is trivially a polynomial in $\theta_1$. Next note that 
\[
(\omega + \omega^{-1})^2 = \omega^2 + \omega^{-2} + 2.
\]
Rearranging this equation yields $\theta_2 = \theta_1^2 -2$. More generally, for $2 \leq k \leq d$, we see that $(\omega + \omega^{-1})^k$ can be expressed as an integral linear combination of $1, \theta_1, \dots, \theta_{k-1}$. By induction we conclude that each eigenvalue $\theta_0, \dots, \theta_d$ can be expressed as a polynomial in $\theta_1$ with integral coefficients.\qed
\end{proof}

Using this lemma, we obtain useful information about the smallest number field containing all of the eigenvalues of the cycle. 
\begin{lemma}
    The extension field $\rats(\theta_1, \dots, \theta_d)$ is isomorphic to $\rats(\theta_1)$, and
    \[
        [\rats(\theta_1) : \rats] = (p-1)/2.
    \] 
\end{lemma}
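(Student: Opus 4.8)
The plan is to recognize $\rats(\theta_1,\dots,\theta_d)$ as the \emph{maximal real subfield} of the cyclotomic field $\rats(\om)$, where $\om = e^{2\pi i/p}$, and then invoke standard cyclotomic field theory. The starting point is the formula $\theta_r = \om^r + \om^{-r} = 2\cos(2\pi r/p)$. First I would establish that $\rats(\theta_1)$ already contains every $\theta_r$, which gives the isomorphism (indeed equality) $\rats(\theta_1,\dots,\theta_d) = \rats(\theta_1)$ claimed in the statement. Then I would compute the degree $[\rats(\theta_1):\rats]$.

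For the first part, the key observation is that each $\theta_r$ can be expressed as a polynomial in $\theta_1$ with rational (in fact integer) coefficients, via the Chebyshev recurrence $\theta_{r+1} = \theta_1\theta_r - \theta_{r-1}$ (equivalently, $2\cos(r\alpha)$ is an integer polynomial in $2\cos\alpha$). Applying this inductively for $r = 1,\dots,d$ shows $\theta_r \in \rats(\theta_1)$ for all $r$, so $\rats(\theta_1,\dots,\theta_d) \sbs \rats(\theta_1)$; the reverse inclusion is trivial. Hence the two fields coincide, which is a fortiori an isomorphism.

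For the degree computation, I would work inside the cyclotomic field $\flde = \rats(\om)$, which satisfies $[\flde:\rats] = \phi(p) = p-1$ since $p$ is prime. The field $\rats(\theta_1) = \rats(\om + \om^{-1})$ is the fixed field of the order-$2$ subgroup $\{1,\sigma\}$ of the Galois group $\Gal(\flde/\rats) \cong (\ints/p\ints)^\times$, where $\sigma$ denotes complex conjugation $\om \mapsto \om^{-1}$; equivalently it is the maximal real subfield of $\flde$. Since $\sigma$ has order exactly $2$ (as $p \geq 3$ forces $\om \neq \om^{-1}$), the tower law gives $[\rats(\theta_1):\rats] = [\flde:\rats]/[\flde:\rats(\theta_1)] = (p-1)/2 = d$, as required.

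I do not anticipate a serious obstacle here, as this is classical material; the only point needing a little care is verifying $[\flde:\rats(\theta_1)] = 2$ rather than $1$, i.e.\ that $\om \notin \rats(\theta_1)$. This follows because $\rats(\theta_1)$ is a real field while $\om$ is non-real for $p \geq 3$, so $\om$ satisfies the degree-$2$ relation $\om^2 - \theta_1\om + 1 = 0$ over $\rats(\theta_1)$ without lying in it. Thus the minimal polynomial of $\om$ over $\rats(\theta_1)$ has degree $2$, closing the argument.
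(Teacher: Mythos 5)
Your proof is correct and takes essentially the same route as the paper's: both work inside the cyclotomic field $\rats(\zeta)$ of degree $\phi(p)=p-1$ and deduce the degree $(p-1)/2$ from the fact that $\zeta$ satisfies the quadratic $\zeta^2-\theta_1\zeta+1=0$ over $\rats(\theta_1)$. You simply supply details the paper leaves implicit, namely the Chebyshev-type recurrence showing each $\theta_r\in\rats(\theta_1)$ and the verification that $\zeta\notin\rats(\theta_1)$ because that field is real.
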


\begin{proof}
By Lemma \ref{theta_polynomial_lemma}, we immediately see that $\rats(\theta_1, \dots, \theta_d)$ is isomorphic to $\rats(\theta_1)$. Recall that the cyclotomic field $\rats(\omega)$ is an algebraic extension of $\rats$, and
\[
    [\rats(\omega): \rats]  = \phi(p) = p-1,
\]
where $\phi$ denotes the Euler phi function. Since $\omega$ is the root of a quadratic polynomial over $\rats(\theta_1)$ and $\theta_1$ is real, we see that $[\rats(\theta_1) : \rats] = (p-1)/2$.\qed
\end{proof}

Finally, we obtain a very useful theorem about the linear independence of a subset of the eigenvalues of the cycle.

\begin{theorem}
\label{prime_independence}
    The set $\{1, \theta_1, \theta_2, \dots, \theta_{(p-3)/2}\}$ is linearly independent 
    over $\rats$.
\end{theorem}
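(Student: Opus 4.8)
The plan is to exploit the preceding lemma, which tells us that $\rats(\theta_1, \dots, \theta_d) = \rats(\theta_1)$ and that $[\rats(\theta_1):\rats] = (p-1)/2$. The proposed set $\{1, \theta_1, \dots, \theta_{(p-3)/2}\}$ has exactly $1 + (p-3)/2 = (p-1)/2$ elements, all lying in $\rats(\theta_1)$. Since this number matches the dimension, it suffices to show the set is a $\rats$-basis of $\rats(\theta_1)$, and for that it is enough to compare it with the power basis $\{1, \theta_1, \theta_1^2, \dots, \theta_1^{(p-3)/2}\}$, which is a basis precisely because the minimal polynomial of $\theta_1$ has degree $(p-1)/2$.

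The key step is to express each $\theta_r$ as a polynomial in $\theta_1$. Writing $\theta_r = \zeta^r + \zeta^{-r}$, the identity $(\zeta + \zeta^{-1})(\zeta^r + \zeta^{-r}) = (\zeta^{r+1} + \zeta^{-(r+1)}) + (\zeta^{r-1} + \zeta^{-(r-1)})$ gives the recurrence $\theta_{r+1} = \theta_1 \theta_r - \theta_{r-1}$, with $\theta_0 = 2$. By induction this yields a monic integer polynomial $p_r$ of degree exactly $r$ with $\theta_r = p_r(\theta_1)$; for instance $p_1(x) = x$, $p_2(x) = x^2 - 2$, and $p_3(x) = x^3 - 3x$. (These are the Dickson, or normalized Chebyshev, polynomials.)

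With this in hand the conclusion is immediate. The elements $1, \theta_1, \dots, \theta_{(p-3)/2}$ are $p_0(\theta_1), p_1(\theta_1), \dots, p_{(p-3)/2}(\theta_1)$, where $p_0 = 1$, and these are the values of monic polynomials of the distinct degrees $0, 1, \dots, (p-3)/2$. Hence the matrix expressing them in the power basis $\{1, \theta_1, \dots, \theta_1^{(p-3)/2}\}$ is triangular with every diagonal entry equal to $1$, so it has determinant $1$ and is invertible over $\ints$ (a fortiori over $\rats$). An invertible change of basis sends a basis to a basis, so our set is a $\rats$-basis of $\rats(\theta_1)$ and in particular is linearly independent over the rationals.

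There is no serious obstacle here: once the preceding lemma supplies the degree $(p-1)/2$, the whole argument reduces to the unitriangularity of the change-of-basis matrix, which follows from the recurrence $\theta_{r+1} = \theta_1\theta_r - \theta_{r-1}$. The only point requiring any care is verifying that $p_r$ is monic of degree exactly $r$, which the recurrence makes routine by induction on $r$.
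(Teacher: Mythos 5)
Your proof is correct and takes essentially the same route as the paper, which argues in two sentences that each $\theta_t$ is an integer polynomial in $\theta_1$ of degree (at most) $t$ while the minimal polynomial of $\theta_1$ has degree $(p-1)/2$. You simply make that sketch precise by exhibiting the Chebyshev-type recurrence $\theta_{r+1}=\theta_1\theta_r-\theta_{r-1}$, verifying the polynomials are monic of exact degree $r$, and drawing the conclusion from the unitriangular change of basis --- a detail the paper's ``degree at most $t$'' phrasing leaves implicit but which is genuinely needed.
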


\begin{proof}
Suppose, for a contradiction, that there exists some rational coefficients $\alpha_k$ for $0 \leq k \leq (p-3)/2$ such that
\[
\alpha_0 + \alpha_1 \theta_1 + \alpha_2 \theta_2 + \dots + \alpha_{(p-3)/2} \theta_{(p-3)/2} = 0,
\]
with at least one $\alpha_k$ non-zero. Since we have $\theta_k = \omega^k + \omega^{-k}$, we see can re-express this equation in terms of $\omega$ as
\[
\alpha_0 + \alpha_1( \omega + \omega^{-1}) + \alpha_2 (\omega^2 + \omega^{-2}) + \dots + \alpha_{(p-3)/2} (\omega^{(p-3)/2} + \omega^{-(p-3)/2}) = 0.
\]
Multiplying both sides by $\omega^{(p-3)/2}$ yields a polynomial equation in terms of $\omega$ with rational coefficients and degree at most $p-3$. This contradicts the fact that $[\rats(\omega): \rats] = p-1$.\qed
\end{proof}

In our following work with $\epsilon$-uniform mixing on $C_{p}$, we consider the scaled exponents of the eigenvalues of $U(t)$ as elements of the additive group $\re/\ints$. Recall that the direct product of $d$ copies of $\re/\ints$ is a \textsl{compact torus}\index{compact torus}. Suppose that $t$ is an element of a compact torus $T$. We say that $t$ is a \textsl{generator}\index{torus generator} if the smallest closed subgroup of $T$ containing $t$ is $T$ itself.

\begin{theorem}[Kronecker]
\label{kronecker}
Let $(t_1,\dots,t_r)$ denote an element of $\re^r$, and let $t$ be the image of this point in $T = (\re/\ints)^r$. Then $t$ is a generator of $T$ if and only if $\{1,t_1,\dots,t_r\}$ are linearly independent over $\rats$. \qed
\end{theorem}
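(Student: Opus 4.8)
The plan is to prove the two implications separately, exploiting the duality between the torus $T=(\re/\ints)^r$ and its character group $\ints^r$. The facts I would take as background are that every continuous character $\chi\colon T\to\{z\in\cx:|z|=1\}$ has the form $\chi(x)=\exp(2\pi i\ip{m}{x})$ for a unique $m\in\ints^r$, and that $\chi$ is trivial precisely when $m=0$. Throughout I identify ``generator'' with the statement that the closure of the cyclic subgroup $\grp{t}$ is all of $T$.

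For the easy direction I would assume $1,t_1,\dots,t_r$ are linearly dependent over $\rats$ and show that $t$ is not a generator. Clearing denominators produces integers $c_0,c_1,\dots,c_r$, not all zero, with $c_0+c_1t_1+\cdots+c_rt_r=0$. If $m:=(c_1,\dots,c_r)$ were zero the relation would force $c_0=0$ as well, contradicting our choice, so $m\neq0$. The associated character $\chi(x)=\exp(2\pi i\ip{m}{x})$ is then nontrivial, yet $\chi(t)=\exp(-2\pi i c_0)=1$ since $c_0\in\ints$. Hence $\chi(nt)=1$ for every $n$, so the closure of $\grp{t}$ is contained in the proper closed subgroup $\ker\chi$, and $t$ fails to generate $T$.

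For the converse I would argue by contraposition. Let $H$ be the closure of $\grp{t}$ and suppose $H\neq T$. The crucial step is to produce a nontrivial character of $T$ that is trivial on $H$: since $T/H$ is a nontrivial compact abelian group it carries a nontrivial character, and pulling this back along the quotient map gives a nontrivial $\chi$ with $H\subseteq\ker\chi$. Writing $\chi(x)=\exp(2\pi i\ip{m}{x})$ with $m\neq0$, the condition $\chi(t)=1$ forces $\ip{m}{t}\in\ints$, that is, $m_1t_1+\cdots+m_rt_r-k=0$ for some $k\in\ints$. This is a nontrivial rational relation among $1,t_1,\dots,t_r$, contradicting their linear independence.

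The main obstacle is exactly this existence of an annihilating character in the converse, namely that a proper closed subgroup of $T$ lies in the kernel of some nontrivial character. This is Pontryagin duality for the compact abelian group $T$ (equivalently, via Peter--Weyl, that a nontrivial compact abelian group has a nontrivial character). An alternative that sidesteps this machinery is the Weyl equidistribution route: linear independence guarantees $\ip{m}{t}\notin\ints$ for every nonzero $m$, so each exponential sum $\tfrac1N\sum_{n=1}^N\exp(2\pi i\, n\ip{m}{t})$ is a bounded geometric average tending to $0$; by Weyl's criterion the orbit $(nt)_n$ is equidistributed, hence dense, so $t$ generates $T$. Either way the duality or equidistribution input is the heart of the matter, while the passage to a rational relation is routine.
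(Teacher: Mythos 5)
Your argument is correct and complete. Note, however, that the paper does not prove this theorem at all: it is stated with a terminal \qed and attributed to Bump's book, so there is no in-paper proof to compare against. What you have written is the standard textbook proof. The forward direction (a rational dependence $c_0+c_1t_1+\cdots+c_rt_r=0$ yields a nontrivial character $\chi(x)=\exp(2\pi i\ip{m}{x})$ with $m=(c_1,\dots,c_r)\neq 0$ killing $t$, hence trapping $\overline{\grp{t}}$ in the proper closed subgroup $\ker\chi$) is airtight, and you correctly handle the degenerate case $m=0$. For the converse you correctly isolate the one nontrivial input --- that a proper closed subgroup of the compact abelian group $T$ is annihilated by some nontrivial character --- and attribute it to Pontryagin/Peter--Weyl; for the torus specifically this can also be seen concretely from the classification of closed subgroups of $(\re/\ints)^r$, but citing duality is perfectly acceptable. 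Your alternative via Weyl's equidistribution criterion is likewise valid and is arguably better suited to the paper's application, since the authors only use density of the orbit $\{\alpha t:\alpha\in\ints\}$ to approximate the coordinates of $\widehat{F_p}$, and equidistribution delivers density directly without invoking the group structure of the closure.
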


\section{$\epsilon$-uniform mixing on $C_{p}$}
In this section we show that $\epsilon$-uniform mixing occurs on cycles of prime length. In particular, we show in these cases that $U(t)$ gets arbitrarily close to the scalar multiple of a complex Hadamard matrix. 

We rely on the viewpoint of cyclic association schemes. Let $p$ denote an odd prime and consider the cycle $C_p$. Let $d = \lfloor \frac{p}{2} \rfloor$. For $0 \leq r \leq d$ we define the following adjacency matrices.
\[
[A_r]_{j,k} = \begin{cases} 1 \; \; \text{if} \; \; j-k \in \{r, -r\} \pmod p \\ 0 \; \; \text{otherwise}.  \end{cases}
\]
Note that $A_0$ is the $p \times p$ identity matrix, and $A_1$ is the adjacency matrix of the cycle $C_p$. Let $\mathcal{A} = \{A_0, \dots, A_d\}$. The set of matrices $\mathcal{A} = \{A_0, \dots, A_d\}$ form the \textsl{cyclic association scheme} of order $p$.

It is also convenient to consider the underlying permutation matrix $C$ that is the adjacency matrix of a directed cycle. We index the rows and columns of $C$ with elements of $\ints_p$ such that
\[
C_{j,k} = \begin{cases} 1 \; \; \text{if} \; \; j-k \equiv 1 \pmod p \\ 0 \; \; \text{otherwise}.  \end{cases}
\]

Let $\mathcal{A} = \{A_0, \dots, A_d\}$ denote the cyclic association scheme on $p$ vertices. Since $p$ is an odd prime, we see that $\mathcal{A}$ has $d = (p-1)/2$ classes.  We further let $\{E_0, \dots, E_d\}$ denote the spectral idempotents of $\mathcal{A}$. We assume that these idempotents have been ordered such that $E_0 = \frac{1}{p}J$ and 
\[
E_r = \sum_{j = 0}^{n-1}\left( \omega^{jr} + \omega^{-jr} \right) C^j. 
\]
for $1 \leq r < \lfloor n/2 \rfloor$.

To begin our new work, we define
\begin{equation}
\label{Fp_equation}
F = \sum_{r=0}^d \omega^{r^2} E_r,
\end{equation}
where $\omega = e^{i2\pi/p}$.

\begin{lemma}
The matrix $F$ is a flat unitary matrix.\qed
\end{lemma}
\begin{proof}
First we verify that $F$ is unitary. We do this by a direct computation. It is convenient to recall that 
\[
E_r^2 = E_r \quad \text{and} \quad E_r E_j = 0 \; \; \text{if} \; \; r \neq j .
\]
Using these observations we see that
\[
F F^{*} = \left(\sum_{r=0}^d \omega^{r^2} E_r \right) \left(\sum_{r=0}^d \omega^{-r^2} E_r \right) = \sum_{r = 0}^d E_r = I.
\]
Next we use the discrete Fourier transform $\Theta$ to show that $F$ is flat. Note that for an arbitrary matrix $M$ in $\cx[\mathcal{A}]$ there exists a unique polynomial $p(x)$ of degree at most $p-1$ in $\cx[x]$ such that $M = p(C)$. Further note that $\Theta$ is defined such that 
\[
\Theta(M) = \sum_{j =0}^{n-1} p(\omega^j) C^j. 
\]
Since $\Theta$ is linear, we have
\begin{align*}
\Theta(F) \Theta(F^{*}) &= \left( \sum_{r=0}^d \omega^{r^2} \Theta(E_r) \right) \left( \sum_{r=0}^d \omega^{-r^2} \Theta(E_r) \right) \\
&=  \left( \sum_{r=0}^d \omega^{r^2} A_r \right) \left( \sum_{r=0}^d \omega^{-r^2} A_r \right) \\
&= \left( \sum_{j=0}^{p-1} \omega^{j^2} C^j \right) \left( \sum_{j=0}^{p-1} \omega^{-j^2}C^j \right) \\
&= \sum_{k=0}^{p-1} \left(\sum_{j=0}^{p-1} w^{j^2 - (k-j)^2} \right) C^{k} \\
&= \sum_{k=0}^{p-1} \omega^{-k^2} \left(\sum_{j=0}^{p-1} w^{jk} \right) C^{k} \\
&= pC^0 = pI.
\end{align*}
By a well-known property of the discrete Fourier transform, we know that $\Theta(F) \Theta(F^*) = p I$ implies that
\[
F \circ F^{*} = \frac{1}{p}J.
\]
Therefore $F$ is flat.\qed
\end{proof}

Our goal is to show that $U(t)$ gets arbitrarily close to a complex scalar multiple of $F$ as $t$ ranges over all real numbers. Since $F$ is a flat matrix, achieving this goal implies that $C_p$ admits $\epsilon$-uniform mixing. The proof of this result relies heavily on Kronecker's Theorem.

\begin{theorem}
\label{CpepsilonUM}
The odd prime cycle $C_p$ admits $\epsilon$-uniform mixing.
\end{theorem}

\begin{proof}
Let $U'(t)$ denote the scaled transition matrix given by
\begin{equation}
\label{scaled_transition}
U'(t) = e^{-2it} U(t) = E_0 + e^{(\theta_1 - 2)it}E_1 + \dots + e^{(\theta_d -2)it} E_d.
\end{equation}
Note that $U'(t)$ is a unitary matrix, and $U'(t)$ is flat if and only if $U(t)$ is flat. Let $\epsilon$ denote a positive real number. We proceed by showing that there exists some time $t$ such that 
\[
|| U'(t) - F || < \frac{ \epsilon}{2}.
\]
We consider $U'(t)$ at times that are an integer multiple of $2 \pi/p$. For any $s$ in $\ints$, we see that Equation \ref{scaled_transition} becomes
\[
U'(2 s \pi/p) = \sum_{r=0}^{d} e^{2s(\theta_r - 2) \pi i/p} E_r.
\]
In terms of $e$, we express Equation \ref{Fp_equation} as
\[
F = \sum_{r=0}^{d} e^{2r^2 \pi i/p} E_r.
\]
Our goal is to find a time $t$ such that the coordinates of $F$ and $U'(t)$ are close to the same value. In terms of the exponents of these coefficients, this is equivalent to finding some integer $s$ such that $\frac{1}{p}r^2 \approx \frac{1}{p}(\theta_r - 2)s$ in $(\re/\ints)$ for $0 \leq r \leq d$. For two elements $x$ and $y$ in $\re/\ints$, we define the distance $|x - y|_{\re/\ints}$ to be
\[
|x - y|_{\re/\ints} = \inf_{k \in \ints}|x-y-k|,
\]
where the norm on the right hand side of the definition is the absolute value of $x-y-k$ considered as a real number.

From Theorem \ref{prime_independence}, we know that $1, \theta_1, \dots, \theta_{d-1}$ are linearly independent over $\rats$, and consequently $\left\{1, \frac{1}{p}(\theta_{1} -2), \dots, \frac{1}{p}(\theta_{d-1} -2)\right\}$ is linearly independent over the rationals.

By Kronecker's Theorem (Theorem \ref{kronecker}), we see that 
\[
D = \left\{ \left(\frac{1}{p}(\theta_{1} -2)s, \dots, \frac{1}{p}(\theta_{d-1} -2)s \right) : s \in \ints \right\}
\]
is dense in $(\re/\ints)^{d-1}$. 

Therefore for any $\delta > 0$, we can find some $s$ in $\ints$ such that
\begin{equation}
\label{pcyc_exponents}
\left| \frac{1}{p} (\theta_r - 2)s - \frac{r^2}{p} \right|_{\re/\ints} < \delta
\end{equation}
in $(\re/\ints)$ for all $1 \leq r \leq d-1$. It remains to consider the coordinates of $U'(t)$ and $F$ with respect to $E_d$.  Recall that for a cyclic association scheme we have
\[
    \theta_d = -1 - \theta_1 - \theta_2 \dots - \theta_{d-1}.
\]
We can use this to derive an expression for the $d$-th coordinate of $U'(t)$ in terms of the first $d-1$ coordinates.
\begin{align*}
	\frac{1}{p}(\theta_d -2)s &= \frac{1}{p}\left(-3 - \sum_{r=1}^{d-1} \theta_r\right)s \\
	&= - \frac{1}{p}(2(d-1) + 3)s - \sum_{r=1}^{d-1}\frac{1}{p}(\theta_r - 2)s \\
	\label{coordinate_simp2_equation} &= -s - \sum_{r=1}^{d-1}\frac{1}{p}(\theta_r - 2)s.
\end{align*}
Now working in $\re/\ints$, we see that the exponent of the $d$-th coordinate of $U'(t)-F$ is
\begin{align}
\frac{1}{p}(\theta_d -2)s - \frac{1}{p}d^2 &=  -s - \sum_{r=1}^{d-1}\frac{1}{p}(\theta_r - 2)s - \frac{1}{p}d^2 \\
&=  \sum_{r=1}^{d-1}\left( \frac{1}{p}(\theta_r - 2)s - \frac{1}{p}r^2 \right) + \frac{1}{p} \sum_{r=0}^d r^2 .
\end{align}
Note that
\begin{align*}
   \frac{1}{p} \sum_{r=1}^{d} r^2 &= \frac{d(d+1)(2d+1)}{6p} = \frac{(p-1)(p+1)}{24}
\end{align*}
Since $p$ is an odd prime, we know that both $p-1$ and $p+1$ are even, and exactly one of those values is divisible by 4. Therefore $(p-1)(p+1)$ is divisible by 8. Since we are assuming that $p \neq 3$, we also know that  $p-1$ or $p+1$ is divisible by 3. It follows that 
\[
\frac{(p-1)(p+1)}{24} \in \ints.
\]
must be an integer. We simplify Equation \ref{coordinate_simp2_equation} in $\re/\ints$ to
\begin{equation*}
\frac{1}{p}(\theta_d -2)s - \frac{1}{p}d^2 =  \sum_{r=1}^{d-1}\left( \frac{1}{p}(\theta_r - 2)s - \frac{1}{p}r^2 \right).
\end{equation*}
Now we use this expression and Inequality \ref{pcyc_exponents} to bound the coefficient $d$-th coordinate of $U'(t)-F$ in terms of $\delta$ as follows:
\begin{align*}
\left| \frac{1}{p}(\theta_d -2)s - \frac{1}{p}d^2 \right|_{\re/\ints} & \leq  \sum_{r=1}^{d-1}\left| \frac{1}{p}(\theta_r - 2)s - \frac{1}{p}r^2 \right|_{\re/\ints} < (d-1)\delta.
\end{align*}
This implies that for any $\epsilon > 0$, we can find a sufficiently small $\delta$ such that 
\[
\left|\left|U'(2s \pi/p) - F \right|\right| < \frac{\epsilon}{2}.
\]
It can be shown that if $A$ and $B$ are symmetric $n \times n$ complex matrices, such that 
\[
|| A - B || \leq \gamma,
\]
for some positive real number $\gamma$. Then
\[
|| A \circ A^* - B \circ B^* || \leq 2 \gamma.
\]
Therefore it follows that
\[
|| U'(2s \pi/p) \circ U'(2s \pi/p)^{*} - \frac{1}{n}J || < \epsilon.
\]
Finally, we note that $U'(2s \pi/p) \circ U'(2s \pi/p)^{*} = U(2s \pi/p) \circ U(2s \pi/p)^{*}$, which proves that $\epsilon$-uniform mixing occurs on $C_p$.\qed
\end{proof}

\section{Future work}
Our results strongly supports the conjecture of Ahmadi et al.~\cite{ahmadi03} that no cycle $C_n$, except for $C_3$ and $C_4$, admits uniform mixing. In this paper we proved that no prime cycles, except for $C_3$, admit uniform mixing. Thus it would be desirable to relate the mixing properties of 
$C_{pm}$ to mixing properties of $C_m$ for any prime $p$ and integer $m$. This could lead to a complete proof of Ahmadi et al.'s conjecture.

Another area for future work is finding additional examples of graphs in association schemes that admit uniform mixing. Recent work by Chan shows that such graphs exist in certain schemes derived from $n$-cubes \cite{chan13}.

In addition to those problems, there are still basic questions concerning uniform mixing that remain unanswered. For example, it is unknown whether a graph that admits uniform mixing is necessarily regular.

\bibliographystyle{plain}
\bibliography{IUM}

\end{document}